\newcommand{\rhs}{\mathbf{H}_{\mathbf{R}}}
\newcommand{\chs}{\mathbf{H}_{\mathbf{C}}}
\newcommand{\qhs}{\mathbf{H}_{\mathbf{Q}}}
\newcommand{\ohs}{\mathbf{H}_{\mathbf{O}}}
\theoremstyle{plain}
\newtheorem{theorem}{Theorem}
\newtheorem{lemma}[theorem]{Lemma}
\newtheorem{proposition}[theorem]{Proposition}
\theoremstyle{definition}
\newtheorem{definition}[theorem]{Definition}
\newtheorem{example}[theorem]{Example}
\theoremstyle{remark}
\newtheorem{remark}[theorem]{Remark}
\newtheorem*{acknowledgments}{Acknowledgments}
\begin{document}
\title[Minimal hypersurfaces in $\qhs^n$]{Complete minimal hypersurfaces\\in quaternionic hyperbolic space}
\author[J.L. Orjuela Ch.]{Jaime Leonardo Orjuela Chamorro}
\curraddr{Departamento de Matem\'atica\\Universidade Federal de S\~ao Carlos\\S\~ao Carlos, SP~13565-905\\Brazil}
\email{jlorjuelac@gmail.com}
\date{August 2013}
\thanks{The author was partially supported by FAPESP grant \mbox{2007/08513-6} and by CAPES~PNPD grant \mbox{02885/09-3}.}
\keywords{Minimal hypersurfaces, polar actions, quaternionic hyperbolic space, bisectors, fans}
\subjclass[2010]{Primary 53C42; Secondary 53C35}
\begin{abstract}
We construct new examples of embedded, complete minimal hypersurfaces in quaternionic hyperbolic space and also some minimal foliations. We introduce fans and construct analytic deformations of bisectors.
\end{abstract}
\maketitle
\section{Introduction}
It is a natural procedure to try to transfer results obtained in complex hyperbolic geometry to quaternionic hyperbolic geometry. On the other hand, the richness of the algebraic and geometric structure of rank one symmetric spaces makes these Riemannian manifolds reasonable candidates to test various geometric problems. In the literature there are two prominent examples of minimal hypersurfaces in complex hyperbolic space $\chs^n$, namely, \emph{bisectors} and \emph{fans}. Bisectors are introduced by Giraud~\cite{giraud} and then by Mostow~\cite{mostow1} (he calls a bisector a \emph{spinal surface}). In \cite{goldman0} Goldman makes a systematic study of this class of hypersurfaces. Fans are introduced by Goldman and Parker in \cite{goldman-parker1}. Bisectors and fans are used to construct fundamental polyhedra for discrete groups of isometries of $\chs^n$; see, for instance, \cite{mostow1}, \cite{goldman-parker0}, \cite{goldman-parker1}, \cite{gusevskii-parker0} and \cite{gusevskii-parker1}. Bisectors are defined for any hyperbolic space (in general for metric spaces), simply as the geometric loci of all points equidistant from two distinct given points. They are introduced as replacements for totally geodesic real hypersurfaces in non-real hyperbolic spaces and are used to construct Dirichlet fundamental polyhedra (see \cite{apanasov-kim1}). We note that bisectors in quaternionic hyperbolic space $\qhs^n$ are ruled minimal hypersurfaces of cohomogeneity one, all congruent and diffeomorphic to $\mathbf{R}^{4n-1}$ (see $\S$\ref{sec-bisectors}). We introduce fans in $\qhs^n$ following \cite{goldman-parker1}. Fans can be viewed as limit cases of bisectors (see Example~\ref{fans-example}). We notice that fans in $\qhs^n$ are homogeneous, ruled, minimal hypersurfaces all congruent and diffeomorphic to $\mathbf{R}^{4n-1}$ (see $\S$\ref{sec-fans}).

In this paper, we construct new examples of complete minimal hypersurfaces in $\qhs^n$ using the method of equivariant differential geometry introduced by Hsiang-Lawson in \cite{hsiang-lawson1}.

An isometric action of a Lie group on a Riemannian manifold is called \emph{polar} if there exists a connected, complete (necessarily totally geodesic) submanifold intersecting each orbit orthogonally; such a submanifold is called a \emph{section}. In an analogy with \cite{gorodski-gusevskii1}, where $\chs^n$ is studied, we consider several subgroups of the isometry group of $\qhs^n$ which are adapted to its Iwasawa decomposition. These subgroups define polar actions of cohomogeneity two on $\qhs^n$. As sections, we always have a totally geodesic real hyperbolic plane. We then compute the canonical projection  (i.e., the orbital invariants)  and write the reduced ordinary differential equation (\ref{reduced-ode}) in the orbit space (which is embedded naturally and isometrically in the section), whose solutions are the curves generating minimal hypersurfaces in $\qhs^n$. Our main results are the following:
\begin{theorem}\label{theorem-one}
\begin{enumerate}[(i)]
\item\label{theorem-one-i} For each $m=1,\dots,n-1$, let $H=Sp(m)\times Sp(n-m)\times\{1\}$ be embedded diagonally into the isometry group $\mathbf{P}Sp(n,1)$ of $\qhs^n$. Then there exist infinitely many non-congruent embedded, complete, minimal hypersurfaces in $\qhs^n$ that are $H$-equivariant (and hence, of cohomogeneity one).
\item\label{theorem-one-ii} For each $m=2,\dots,n-1$, let $H=Sp(n-m)\times\{1\}\times Sp(m-1,1)$ be embedded diagonally into the isometry group $\mathbf{P}Sp(n,1)$ of $\qhs^n$. Then there exist infinitely many non-congruent embedded, complete, minimal hypersurfaces in $\qhs^n$ that are $H$-equivariant (and hence, of cohomogeneity one).
\end{enumerate}
\end{theorem}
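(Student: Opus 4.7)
The plan is to apply the Hsiang-Lawson equivariant reduction that the introduction already sets up. For each $H$ in (i) and (ii) the action on $\qhs^n$ is polar of cohomogeneity two with a section that is a totally geodesic $\rhs^2$, so $H$-invariant hypersurfaces of $\qhs^n$ are in bijective correspondence with regular curves in the orbit space $\qhs^n/H$, which embeds isometrically as a fundamental domain for the residual (generalized Weyl) action on that section. Under this correspondence the minimality condition becomes the reduced ODE (\ref{reduced-ode}), a nonlinear second-order equation weighted by the principal orbit volume.

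Given this reduction I would proceed in three steps. First, for each of the actions (i) and (ii), read off the orbital data (already required to write (\ref{reduced-ode})) and describe explicitly the boundary of the orbit space, which consists of arcs that are the images of the singular orbits of $H$. Second, construct a one-parameter family $\{\gamma_s\}_{s\in I}$ of maximal solutions of (\ref{reduced-ode}) by prescribing initial conditions that start orthogonally from a fixed smooth portion of that boundary, with $s$ measuring the position of the foot-point along the boundary; smooth dependence on initial data gives the family locally, and a qualitative analysis of the ODE is then used to extend each $\gamma_s$ globally as an embedded curve inside the orbit space. Third, transfer these properties back to $\qhs^n$: the orthogonal meeting with the singular boundary guarantees that the hypersurface $M_s\subset\qhs^n$ generated by $\gamma_s$ closes up smoothly and is both embedded and complete, while the parameter $s$ is itself a geometric invariant (the distance from $M_s$ to a fixed totally geodesic singular orbit), so distinct values of $s$ yield non-congruent hypersurfaces.

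The main obstacle is the global analysis of (\ref{reduced-ode}). Local existence and the initial orthogonality are routine, but ruling out self-intersections, re-entry into the singular locus, and finite-time breakdown requires qualitative control of the solutions all the way to the ideal boundary of $\qhs^n$. I would look for a monotone quantity intrinsic to (\ref{reduced-ode}), either a Noether-type first integral or a barrier constructed from an explicit model solution such as a bisector or a fan, reduce the problem to a phase-plane analysis on the orbit space, and then classify the asymptotic behavior of each $\gamma_s$ at infinity. The most technical point is expected to be the non-crossing of the curves $\gamma_s$, which is what yields both embeddedness and non-congruence; I would establish it by a sub- and supersolution comparison argument together with the uniqueness theorem for (\ref{reduced-ode}).
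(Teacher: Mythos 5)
Your plan matches the paper's proof almost exactly: the paper applies the Hsiang--Lawson reduction to the polar cohomogeneity-two actions of $\S$\ref{elliptic} and $\S$\ref{loxodromic}, takes the one-parameter family of solutions $c_a$ of (\ref{elliptic-loxodromic-ode}) emanating orthogonally from the codimension-one boundary stratum (Lemma~\ref{boundary}), and controls global behavior with the single monotone quantity $I=\mathcal{V}\cos\sigma$, which is shown to satisfy $\frac{dI}{ds}>\frac{4n+1}{2}I$ and hence blow up, forcing $r_a\to\infty$ and $\theta_a\in(0,\frac{\pi}{2})$. One small clarification: the paper does not need (and does not prove) non-crossing of the different curves $\gamma_a$, nor a sub/supersolution comparison; embeddedness of each individual hypersurface follows merely from the strict monotonicity of $r_a$ along that single curve, and non-congruence is read off from the geometric meaning of the initial foot-point $a$ on the singular stratum.
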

It is not hard to show that the hypersurfaces constructed in Theorem~\ref{theorem-one}, case~(\ref{theorem-one-i}), are of the diffeomorphic type of $\mathbf{R}^{4m}\times\mathbf{S}^{4n-4m-1}$, with a homogeneous \emph{ideal boundary}  of the diffeomorphic type of $\mathbf{S}^{4m-1}\times\mathbf{S}^{4n-4m-1}$ (product of $\mathbf{Q}$-spheres). Here the ideal boundary of an embedded submanifold $M$ of $\qhs^n$ is defined to be $\partial M:=\bar{M}\cap\partial\qhs^n$, where $\bar{M}$ denotes the closure of $M$ relative to $\qhs^n\cup\partial\qhs^n$.

We show that bisectors are non-rigid as minimal hypersurfaces:
\begin{theorem}\label{theorem-two}
Bisectors in $\qhs^n$ admit non-trivial deformations preserving minimality. Namely, each bisector belongs to an analytic one-parameter family of minimal hypersurfaces such that no other member in the family is a bisector.
\end{theorem}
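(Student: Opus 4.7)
The plan is to apply the equivariant machinery already developed in the paper to the cohomogeneity-one symmetry group of a fixed bisector, and then invoke real-analytic dependence of ODE solutions on initial data to produce a nontrivial deformation.

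First I would identify the identity component $K$ of the stabilizer of a fixed bisector $B \subset \qhs^{n}$ inside $\mathbf{P}Sp(n,1)$. Because $B$ is ruled and of cohomogeneity one (as recorded in $\S$\ref{sec-bisectors}), $K$ acts on $B$ with codimension-one orbits and hence on the ambient $\qhs^{n}$ with cohomogeneity two. The natural candidate is read off from the standard spinal decomposition of a bisector as the stabilizer of its real (and complex) spine. I would then verify that the $K$-action is polar with section a totally geodesic real hyperbolic plane $\Sigma \cong \rhs^{2}$, placing $K$ squarely within the framework yielding the reduced equation (\ref{reduced-ode}).

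Next I would write down (\ref{reduced-ode}) for this specific $K$-action and observe that $B$ corresponds to a real-analytic curve $\gamma_{B}$ in the two-dimensional orbit space, embedded isometrically in $\Sigma$. Since the coefficients of the reduced ODE are real-analytic on the principal stratum, classical ODE theory gives real-analytic dependence of solutions on initial data. Fixing a principal point $p_{0} \in \gamma_{B}$ and perturbing the initial condition transversely to $\gamma_{B}$ at $p_{0}$ then produces a real-analytic one-parameter family $\{\gamma_{\varepsilon}\}_{|\varepsilon|<\delta}$ of solutions with $\gamma_{0}=\gamma_{B}$. Each $\gamma_{\varepsilon}$ generates a $K$-equivariant minimal hypersurface $M_{\varepsilon} \subset \qhs^{n}$, and continuous dependence on $\varepsilon$ preserves embeddedness and completeness for $\varepsilon$ small, by arguments analogous to those used for Theorem~\ref{theorem-one}.

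The hard part will be ruling out that nearby $M_{\varepsilon}$ are themselves bisectors. I expect the subfamily of $K$-invariant bisectors to be discrete (indeed, the requirement of $K$-invariance essentially pins down the spinal data of a bisector up to a small set), so at most a countable set of values of $\varepsilon$ could produce bisectors; combined with the real-analyticity of the family, this forces generic small $\varepsilon \neq 0$ to give non-bisectors. A concrete certificate would go via ideal boundaries: the ideal boundary of a bisector has the distinguished structure of two hemispheres glued along a spinal sphere, and a generic analytic perturbation of $\gamma_{B}$ should destroy this feature, detectable from the asymptotic behavior of $\gamma_{\varepsilon}$ at the ideal boundary of $\Sigma$.
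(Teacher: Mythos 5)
Your overall strategy (use the cohomogeneity-two polar action associated with a bisector's symmetry group, write the reduced ODE, and invoke analytic dependence to produce a deformation) is the same as the paper's. But there is a genuine gap in the step that carries the whole theorem, namely showing that \emph{no} other member of the family is a bisector.

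Your argument at that point is a heuristic: you expect the set of $K$-invariant bisectors to be discrete and then appeal to ``generic small $\varepsilon$.'' Even granting discreteness, this only shows that \emph{most} nearby parameters give non-bisectors, not that the family contains a \emph{unique} bisector, which is what Theorem~\ref{theorem-two} asserts (a countable set of bad $\varepsilon$'s could still accumulate at $0$). The paper instead proves uniqueness outright with a concrete geometric argument: if any $H$-invariant hypersurface in the family (here $H=N\cdot\mathbf{R}_{+}\cdot Sp(n-1)$, a carefully chosen subgroup of the full stabilizer) were a bisector $\mathcal{B}'$, then $H$ stabilizes its spine $\sigma'$; $\mathbf{R}_{+}$-invariance of $\sigma'$ forces $0$ and $\infty$ to lie in $\partial\sigma'$, so $\mathcal{B}'$ has the same $\mathbf{Q}$-spine as the original $\mathcal{B}$; totally geodesicity of $\sigma'$ then puts the base-point on it, and transitivity of $N\cdot\mathbf{R}_{+}$ on $\sigma$ forces $\sigma'=\sigma$, hence $\mathcal{B}'=\mathcal{B}$. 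That step cannot be replaced by a ``genericity'' argument without further work, and your alternative ``certificate via ideal boundaries'' is only a sketch (all members of the family are diffeomorphic to $\mathbf{R}^{4n-1}$ with spherical ideal boundary, so topology alone will not distinguish a bisector).

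Two smaller points. First, ``the stabilizer of its real (and complex) spine'' is not quite the right group: in $\qhs^{n}$ the identity component of the stabilizer is $N\cdot\mathbf{R}_{+}\cdot Sp(n-1)\cdot T^{1}$ and one must drop the $T^{1}$ factor to obtain a polar cohomogeneity-two action with a $\rhs^{2}$ section; your phrasing leaves this unresolved. Second, completeness and embeddedness of the deformed hypersurfaces do \emph{not} follow from ``continuous dependence on $\varepsilon$'' --- the paper reproves the global behavior of each solution curve via the semi-first-integral $I=\mathcal{V}\cos\sigma$, and analyticity at the boundary of the orbit space (where the ODE is singular) requires the separate technical Lemma~\ref{analytic}; perturbing at an interior principal point, as you propose, does not by itself produce the boundary-emanating solutions needed for that argument.
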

We also construct some interesting minimal foliations of $\qhs^n$:
\begin{theorem}\label{theorem-three}
\begin{enumerate}[(i)]
\item\label{theorem-three-i} For each $m = 1,\dots,n-1$ there exists a foliation of $\qhs^n$ by minimal hypersurfaces diffeomorphic to $\mathbf{R}^{4n-1}$, invariant by a one-parameter group of transvections, and such that each leaf has an ideal boundary of the homeomorphic type of a pinched Hopf manifold of type $(4m-1,4n-4m-1)$.
\item\label{theorem-three-ii} There exists a foliation of $\qhs^n$ by minimal hypersurfaces diffeomorphic to $\mathbf{R}^{4n-1}$, invariant by a one-parameter group of transvections, and such that each leaf has an ideal boundary of the homeomorphic type of a bouquet of two spheres $\mathbf{S}^{4n-2}$.
\item\label{theorem-three-iii} There exists a foliation of $\qhs^n$ by homogeneous, ruled, minimal hypersurfaces diffeomorphic to $\mathbf{R}^{4n-1}$, invariant by a group of parabolic isometries, and such that each leaf has an ideal boundary of the homeomorphic type of $\mathbf{S}^{4n-2}$. Namely, each leaf is a fan.
\end{enumerate}
\end{theorem}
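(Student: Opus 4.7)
The plan is to exploit the same Hsiang--Lawson reduction that underlies Theorem~\ref{theorem-one}: $H$-equivariant hypersurfaces in $\qhs^n$ correspond to curves in the orbit space $\qhs^n/H$, which is embedded as a planar domain in a totally geodesic $\rhs^2$, and minimality is equivalent to the reduced ODE~(\ref{reduced-ode}). A foliation of $\qhs^n$ by $H$-equivariant minimal hypersurfaces is thus equivalent to a foliation of that orbit space by integral curves of~(\ref{reduced-ode}). The topology of each leaf, and of its ideal boundary, can be recovered from how the generating curve sits inside the orbit space and how it meets the boundary of the section (which contains both the singular orbit stratum and the trace of $\partial\qhs^n$). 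The general mechanism for producing such foliations is: any one-parameter subgroup of $\mathbf{P}Sp(n,1)$ normalizing $H$ descends to a flow on the orbit space preserving~(\ref{reduced-ode}), so translating a single transverse seed solution of~(\ref{reduced-ode}) along such a flow yields the desired foliation.

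For part~(\ref{theorem-three-i}) I take $H=Sp(m)\times Sp(n-m)\times\{1\}$ and locate in its normalizer a one-parameter subgroup $\{\varphi_t\}$ of transvections whose induced action on the orbit space is a hyperbolic translation along a geodesic $\gamma\subset\rhs^2$. I then run a shooting argument in~(\ref{reduced-ode}) starting from the curve perpendicular to $\gamma$ at a base point: the resulting generating curve $c$ is, by uniqueness and $\varphi_t$-equivariance, disjoint from all its nontrivial translates, and a qualitative asymptotic analysis of~(\ref{reduced-ode}) must show that the family $\{\varphi_t(c)\}$ exhausts the orbit space. Since $c$ is arranged to avoid both singular strata, each leaf has the required diffeomorphism type, and the homeomorphism type of the ideal boundary is extracted by tracking the two limits of $c$ on $\partial\rhs^2$ together with the residual isotropy of $H$ at those limits; the resulting identifications on the principal orbit should reproduce the pinched Hopf manifold of type $(4m-1,4n-4m-1)$. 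Part~(\ref{theorem-three-ii}) is parallel, now with $H=Sp(n-m)\times\{1\}\times Sp(m-1,1)$ and the associated transvection subgroup from Theorem~\ref{theorem-one}(\ref{theorem-one-ii}); the non-compact $Sp(m-1,1)$-factor will force the two endpoints of $c$ on $\partial\rhs^2$ to coincide at a single ideal point, so the two sphere components produced in the limit are wedged together to yield the bouquet $\mathbf{S}^{4n-2}\vee\mathbf{S}^{4n-2}$.

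For part~(\ref{theorem-three-iii}), a fan is already exhibited in $\S$\ref{sec-fans} as the orbit of a specific parabolic subgroup, so a foliation by fans only requires a one-parameter parabolic subgroup $\{\psi_s\}$ of $\mathbf{P}Sp(n,1)$ transverse to a reference fan $F_0$ such that $\{\psi_s(F_0)\}$ fills $\qhs^n$. The natural candidate is a one-parameter group in the nilpotent radical of the Iwasawa decomposition complementary to the nilpotent part stabilizing $F_0$; in horospherical coordinates, disjointness, exhaustion, and congruence of each leaf to $F_0$ (hence its being a fan) are immediate, and the ideal boundary of each leaf is a sphere $\mathbf{S}^{4n-2}$ by the structure of $F_0$ recalled in $\S$\ref{sec-fans}.

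The hardest step, in my view, will be the asymptotic analysis in parts~(\ref{theorem-three-i}) and~(\ref{theorem-three-ii}): verifying that the family $\{\varphi_t(c)\}$ is genuinely a foliation of the full orbit space, rather than merely of an open subset, and then pinning down the precise homeomorphism type of the ideal boundary. Both difficulties reduce to a careful qualitative study of~(\ref{reduced-ode}) near $\partial\rhs^2$, combined with a description of how the principal $H$-orbits degenerate as their footpoints approach the ideal boundary of $\qhs^n$. Part~(\ref{theorem-three-iii}), by contrast, should be essentially a direct Iwasawa-coordinate calculation.
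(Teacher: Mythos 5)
Your plan for part~(\ref{theorem-three-iii}) is essentially the paper's: the fan from $\S$\ref{sec-fans} is translated by the one-parameter group of $\rho$-translations coming from $\mathcal{H}^{4n-1}/H$ (Remark~\ref{special-parabolic-remark}~(\ref{special-parabolic-explicit-solutions})), and the resulting family foliates the orbit-space half-plane by horizontal lines. That part is fine.

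For parts~(\ref{theorem-three-i}) and~(\ref{theorem-three-ii}), however, there is a genuine gap: you have chosen the wrong acting groups. You propose to use the elliptic group $H=Sp(m)\times Sp(n-m)\times\{1\}$ for (\ref{theorem-three-i}) and the loxodromic group $H=Sp(n-m)\times\{1\}\times Sp(m-1,1)$ for (\ref{theorem-three-ii}), i.e.\ the groups of Theorem~\ref{theorem-one}, and then to translate a seed solution by a one-parameter subgroup of transvections lying in the normalizer of $H$. No such subgroup exists. In the elliptic case $H$ has the origin $0\in\mathbf{D}^n$ as its unique fixed point, so any element normalizing $H$ must fix $0$, hence is elliptic and not a transvection. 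In the loxodromic case the semisimple factor $Sp(m-1,1)$ of $H$ forces a normalizing element to centralize it, and the centralizer of $Sp(m-1,1)$ in $Sp(n,1)$ is contained in $Sp(n-m)\times\{\pm1\}$, again containing no transvections. Moreover, in both of those cases the orbit space is the compact quarter-disc $\{u^2+v^2\le 1,\ u,v\ge 0\}$, which simply cannot be foliated by translates under a single one-parameter isometric flow. The paper instead works with the \emph{parabolic} subgroups $H=\mathcal{H}^{4m-1}\times Sp(n-m)$ (for (\ref{theorem-three-i}), $\S$\ref{parabolic}) and $H=\{h(\xi,\nu)\in\mathcal{H}^{4n-1}:\Re(\xi_{n-1})=0\}$ (for (\ref{theorem-three-ii}), $\S$\ref{special-parabolic}). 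These are adapted to the Iwasawa decomposition so that the one-parameter group $\mathbf{R}_+$ of transvections along a chosen geodesic normalizes $H$ and descends to the dilatation $(\alpha,\rho,\sigma)\mapsto(\mathrm{e}^{2t}\alpha,\mathrm{e}^t\rho,\sigma)$ on the half-plane/quadrant orbit space (Remarks~\ref{parabolic-remark}~(\ref{parabolic-explicit-solutions}) and~\ref{special-parabolic-remark}~(\ref{special-parabolic-explicit-solutions})). Only then does your ``translate a seed curve'' mechanism make sense; the paper implements it by showing each $\gamma_a$ crosses each dilatation-invariant parabola $\alpha=q^2\rho^2$ exactly once, so that distinct $\gamma_a$ cannot meet. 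The identification of the ideal boundaries as a pinched Hopf manifold, resp.\ a bouquet of two spheres, then comes from tracking the $H$-orbit of the limiting point $(0,R_a)$, resp.\ the two limiting points $(0,\pm R)$, in $\partial\mathcal{S}^n\cup\{\infty\}$; this is also where the parabolic nature of $H$ is essential, since the pinch/wedge point is precisely $\infty$. So your outline for (\ref{theorem-three-i})--(\ref{theorem-three-ii}) needs to be redone with the parabolic subgroups, using the reduced systems~(\ref{parabolic-ode}) and~(\ref{special-parabolic-ode}) rather than~(\ref{elliptic-ode}) and~(\ref{loxodromic-ode}).
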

Here the \emph{pinched Hopf manifold} of type $(k,l)$, for $k$, $l$ positive integers, is defined to be the topological space obtained by contracting a fiber of the trivial bundle $\mathbf{S}^k\times\mathbf{S}^l\to\mathbf{S}^l$ to a point. For instance, for $k=l=1$, we have a pinched torus.

The foliations from Theorem~\ref{theorem-three} induce non-smooth foliations of the ideal boundary $\partial\qhs^n\approx\mathbf{S}^{4n-1}$, pinched at the point at infinity, as it follows from the construction in the proof. Notice that a congruence between two foliations of $\qhs^n$ as in Theorem~\ref{theorem-three} induces a homeomorphism between the respective boundary foliations. Therefore, we see that the foliations in case~(\ref{theorem-three-i}) for $m = 1,\dots,n-1$ together with the foliations in case~(\ref{theorem-three-ii}) and case~(\ref{theorem-three-iii}) are pairwise non-congruent, since the type of the boundaries of their leaves is different. Also it is interesting to recall that, as it follows from the proof of Theorem~\ref{theorem-three} and Proposition~\ref{fans-foliation}, each leaf of the foliation in case~(\ref{theorem-three-iii}) of Theorem~\ref{theorem-three} is isometric to the homogeneous minimal hypersurface $S(0,V_0)$ constructed in \cite{berndt1}, and, in fact, the foliation is built selecting the unique minimal leaf of $\mathfrak{F}(\theta,V_0)$ (which is a specific translate of $S(\theta,V_0)$) for each $\theta\in(-\frac{\pi}{2},\frac{\pi}{2})$.

We want to indicate that the techniques in this paper can be extended to investigate minimal hypersurfaces in the \emph{octonionic hyperbolic plane} $\ohs^2=F_4^*/Spin(9)$. In fact, Chen~\cite{chen1} classified connected Lie subgroups of $F_4^*$. Also, Kollross~\cite{kollross} classified connected \emph{reductive algebraic} subgroups of $F_4^*$ whose actions on $\ohs^2$ are polar. In this way we can construct several families of subgroups of $F_4^*$, adapted to its Iwasawa decomposition, acting polarly on $\ohs^2$ with cohomogeneity two. For an analysis of the real and complex hyperbolic spaces see \cite{docarmo-dajczer1} and \cite{gorodski-gusevskii1}, respectively.
\section{Quaternionic hyperbolic space and its isometry group}
\subsection{Models of $\qhs^n$}\label{models}
Let $\mathbf{Q}$ be the non-commutative normed division algebra of the quaternions. If $q$ is a quaternion we write $q=q_0+iq_1+jq_2+kq_3$ and $\bar{q}=q_0-iq_1-jq_2-kq_3$, where $q_0$, $q_1$, $q_2$, $q_3\in\mathbf{R}$ and $\{1,i,j,k\}$ is the canonical orthonormal basis of $\mathbf{Q}$.

Consider the (right) $\mathbf{Q}$-module $\mathbf{Q}^{n+1}$ of all column vectors $X$ with coefficients $X_1$, $X_2,\dots,X_{n+1}\in\mathbf{Q}$, equipped with the indefinite Hermitean form
\begin{equation}\label{hermitean-form}
\langle X,Y \rangle = \bar{X}_1Y_1+\cdots+\bar{X}_nY_n-\bar{X}_{n+1}Y_{n+1},
\end{equation}
for all $X$, $Y\in\mathbf{Q}^{n+1}$. It determines the following regions of $\mathbf{Q}^{n+1}$
\begin{align*}
V_{+} =& \{X\in\mathbf{Q}^{n+1}: \langle X,X \rangle > 0\}\\
V_{0} =& \{X\in\mathbf{Q}^{n+1}: \langle X,X \rangle = 0\}\\
V_{-} =& \{X\in\mathbf{Q}^{n+1}: \langle X,X \rangle < 0\}.
\end{align*}
The projectivization $\mathbf{P}V_{-}$ is the \emph{quaternionic hyperbolic space} $\qhs^n$. On $\qhs^n$ we define the Riemannian metric
\begin{equation*}\label{metric}
ds^2 = -\frac{4}{\langle X,X \rangle^2}\det\begin{bmatrix}\langle X,X \rangle&\langle dX,X \rangle\\ \langle X,dX \rangle&\langle dX,dX \rangle\end{bmatrix}.
\end{equation*}
So that sectional curvature of $\qhs^n$ lies between $-1$ and $-\frac{1}{4}$. Also the distance function $d$ is given by
\[d(\mathbf{P}X,\mathbf{P}Y) = 2~\mathrm{arccosh}\frac{|\langle X,Y \rangle|}{\sqrt{\langle X,X \rangle\langle Y,Y \rangle}}.\]
The projectivization $\mathbf{P}V_{0}$ defines the \emph{ideal boundary} of $\qhs^n$ which we denote by $\partial\qhs^n$. The point at infinity $\infty\in\partial\qhs^n$ is given by the vector $X^{\infty}\in\mathbf{Q}^{n+1}$ with coordinates $X_l^{\infty}=0$, for $l=1,\dots, n-1$ and $X_n^{\infty}=X_{n+1}^{\infty}=1$.

The group of transformations of $\mathbf{Q}^{n+1}$ that preserve the form (\ref{hermitean-form}) is the non-compact Lie group
\begin{equation*}\label{isometry-group}
Sp(n,1)=\{A\in\mathbf{GL}(n+1,\mathbf{Q}):A^*I_{n,1}A=I_{n,1}\},
\end{equation*}
where $A^*$ denotes the conjugate transpose of the matrix $A$, $I_{n,1}=\begin{bmatrix}I_n&0\\0&-1\end{bmatrix}$ and  $I_n$ is the identity matrix of order $n$. It is clearly seen that $Sp(n,1)$ acts transitively by isometries on $\qhs^n$. This action is not effective because the center of $Sp(n,1)$ (real scalar matrices) acts trivially. Hence $\mathbf{P}Sp(n,1)=Sp(n,1)/\{\pm I_{n+1}\}$ is the isometry group of $\qhs^n$. We recall also that $Sp(n,1)$ acts naturally on $\partial\qhs^n$.

We have several models for the quaternionic hyperbolic space. On $\mathbf{Q}^n$ we consider the Hermitian definite form given by $(x,y)=\sum_{l=1}^n\bar{x}_ly_l$ for all $x,y\in\mathbf{Q}^n$ and write $|x|=\sqrt{(x,x)}$.
Note that the condition $\langle X,X \rangle < 0$ implies $X_{n+1}\not=0$. Hence the diffeomorphism $\mathbf{P}X\mapsto x$ given by
\[\mbox{$x_l=X_lX_{n+1}^{-1}$, for $l=1,\dots,n$}\]
identify $\qhs^n$ and $\partial\qhs^n$ with the (open) unit disc
$\mathbf{D}^n=\left\{x\in\mathbf{Q}^n:|x|<1\right\}$
and the unit sphere $\mathbf{S}^{4n-1}=\left\{x\in\mathbf{Q}^n:|x|=1\right\}$, respectively. We call $x_1,\dots,x_n$ \emph{affine coordinates} for $\qhs^n$. In affine coordinates we have that
\[ds^2=4~\frac{(1-|x|^2)|dx|^2+|(dx,x)|^2}{(1-|x|^2)^2}\]
and
\begin{equation}\label{distance-affine-coordinates}
d(x,y) = 2~\mathrm{arccosh}\frac{|1-(x,y)|}{\sqrt{(1-|x|^2)(1-|y|^2)}}.
\end{equation}
Quaternionic hyperbolic space can also be realized as an unbounded domain. In fact, for all $u\in\mathbf{Q}^n$ let $u'\in\mathbf{Q}^{n-1}$ the projection of  $u$ on the first $n-1$ coordinates. Then the \emph{Cayley transformation}
\begin{equation}\label{cayley-transformation}
\begin{cases}
\zeta'  &= x'(1-x_n)^{-1}\\
\zeta_n &= \mbox{\small$\frac{1}{2}$}(1+x_n)(1-x_n)^{-1}
\end{cases}
\end{equation}
gives a diffeomorphism between $\mathbf{D}^n$ and the \emph{Siegel domain} which is defined by
\[\mathcal{S}^n=\{\zeta\in\mathbf{Q}^n:|\zeta'|^2 - 2\Re(\zeta_n) < 0\}.\]
In the Siegel domain the
ideal boundary is $\partial\mathcal{S}^n=\{\zeta\in\mathbf{Q}^n:|\zeta'|^2 - 2\Re(\zeta_n) = 0\}\cup\{\infty\}$.

Given a geodesic $\gamma$ in $\qhs^n$ parametrized by arc length, the levels of the associated \emph{Busemann function} $h_{\gamma}(p):=\lim\limits_{s\to+\infty}\{d(p,\gamma(s))-s\}$ are called \emph{horospheres} and they foliate $\qhs^n$.
In the Siegel domain we consider the geodesic $\gamma$ with end points $0$ and $\infty$, namely, $\gamma(s)=\begin{bmatrix}0'\\\frac{1}{2}\mathrm{e}^{s}\end{bmatrix}$. It follows from an easy computation from (\ref{distance-affine-coordinates}) and (\ref{cayley-transformation}) that
\begin{equation*}\label{busemann-function}
h_{\gamma}(\zeta)=-\ln(2\Re(\zeta_n)-|\zeta'|^2).
\end{equation*}
Hence, the horospheres $H_{\alpha}=h_{\gamma}^{-1}(-\ln\alpha)$ are given by
\begin{equation*}\label{horoespheres}
\mbox{$H_{\alpha}=\{\zeta\in\mathcal{S}^n:2\Re(\zeta_n)-|\zeta'|^2=\alpha\}$, $\alpha>0$.}
\end{equation*}
So we obtain the \emph{horospherical coordinates} $(\omega,\alpha,\beta)\in\mathbf{Q}^{n-1}\times\mathbf{R}_{+}\times\Im(\mathbf{Q})$, where
\begin{equation*}\label{horospherical-coordinates}
\omega=\zeta' \text{ and } \alpha+\beta=2\zeta_n-|\zeta'|^2.
\end{equation*}
In horospherical coordinates the ideal boundary is $\left(\mathbf{Q}^{n-1}\times\{0\}\times\Im(\mathbf{Q})\right)\cup\{\infty\}$. We have that $H_{\alpha}=\mathbf{Q}^{n-1}\times\{\alpha\}\times\Im(\mathbf{Q})$, so each horosphere is diffeomorphic to $\partial\qhs^n-\{\infty\}$.
Finally
\begin{equation*}\label{metric-horospherical-coordinates}
ds^2 = \frac{d\alpha^2+|d\beta -2\Im(d\omega,\omega)|^2+4\alpha|d\omega|^2}{\alpha^2}.
\end{equation*}
\subsection{Iwasawa decomposition of $Sp(n,1)$}\label{iwasawa}
The Iwasawa decomposition for the non-compact Lie group $Sp(n,1)$ is $Sp(n,1)=\mathcal{H}^{4n-1}\cdot\mathbf{R}_{+}\cdot(Sp(n)\cdot Sp(1))$, where
\begin{align*}\label{iwasawa-decomposition}
Sp(n)\cdot Sp(1) &= \left\{\begin{bmatrix}B&0\\0&\lambda\end{bmatrix}\in Sp(n,1):\mbox{$B\in Sp(n)$ and $\lambda\in Sp(1)$}\right\}/\{\pm I_{n+1}\},\\
\mathbf{R}_{+} &= \left\{\psi_t=\begin{bmatrix}I_{n-1}&0&0\\0&\cosh t&\sinh t\\0&\sinh t&\cosh t\end{bmatrix}:t\in\mathbf{R}\right\},
\end{align*}
and
\begin{equation*}\label{heisenberg-translations}
\mathcal{H}^{4n-1} = \left\{h(\xi,\nu)=\begin{bmatrix}I_{n-1}&\xi&-\xi\\-\xi^*&1-\frac{1}{2}(|\xi|^2+\nu)&\frac{1}{2}(|\xi|^2+\nu)\\
-\xi^*&-\frac{1}{2}(|\xi|^2+\nu)&1+\frac{1}{2}(|\xi|^2+\nu)\end{bmatrix}:
\begin{matrix}\xi\in\mathbf{Q}^{n-1}\\\nu\in\Im(\mathbf{Q})\end{matrix}\right\}.
\end{equation*}
The group $\mathcal{H}^{4n-1}$ is called \emph{quaternionic Heisenberg group} and its elements viewed as isometries of $\qhs^n$ are called \emph{Heisenberg translations}. 

From the geometrical point of view, in the disc model, these subgroups can be described as follows. The group $Sp(n)\cdot Sp(1)$ is the isotropy subgroup at the origin (the base-point) $0\in\mathbf{D}^n$. Consider the geodesic $\gamma(s)=\begin{bmatrix}0'\\\tanh s\end{bmatrix}$. Its centralizer is the subgroup $Sp(n-1)\cdot Sp(1)$ identified with
\begin{equation*}\label{centralizer-geodesic}
\left\{\begin{bmatrix}B&0&0\\0&\lambda&0\\0&0&\lambda\end{bmatrix}\in Sp(n,1):\mbox{$B\in Sp(n-1)$ and $\lambda\in Sp(1)$}\right\}/\{\pm I_{n+1}\}.
\end{equation*}
The group $\mathbf{R}_{+}$ is the one-parameter group of transvections along $\gamma$, namely, each $\psi_t$ is a dilatation which maps $H_{\alpha}$ to $H_{\mathrm{e}^{2t}\alpha}$. The group $\mathcal{H}^{4n-1}$ acts simply and transitively on each horosphere, in particular on the ideal boundary fixing $\infty$, since $H_0$ is identified with $\partial\qhs^n-\{\infty\}$.

Algebraically speaking, we describe the action of some subgroups in horospherical coordinates:
\begin{align*}
B\cdot\lambda(\omega,\alpha,\beta)&=(B\omega\lambda^{-1},\alpha,\lambda\beta\lambda^{-1})\mbox{, for $B\cdot\lambda\in Sp(n-1)\cdot Sp(1)$},\\
\psi_t(\omega,\alpha,\beta)&=(\mathrm{e}^t\omega,\mathrm{e}^{2t}\alpha,\mathrm{e}^{2t}\beta)\mbox{, for $t\in\mathbf{R}$}\\
h(\xi,\nu)(\omega,\alpha,\beta)&=(\xi+\omega,\alpha,\nu+\beta+2\Im(\xi^*\omega))\mbox{, for $\xi\in\mathbf{Q}^{n-1}$ and $\nu\in\Im(\mathbf{Q}$)}.
\end{align*}
Finally, we recall that $\mathcal{H}^{4n-1}$ is isomorphic to $\mathbf{Q}^{n-1}\times\Im(\mathbf{Q})$ equipped with the product $(\xi_1,\nu_1)\cdot(\xi_2,\nu_2):=(\xi_1+\xi_2,\nu_1+\nu_2+2\Im(\xi_1^*\xi_2))$, and its Lie algebra is isomorphic to $\mathbf{Q}^{n-1}\oplus\Im(\mathbf{Q})$ where $[v_1+z_1,v_2+z_2]:=4\Im(v_1^*v_2)$.
\section{Bisectors and fans}
Let $E$ be a $\mathbf{Q}$-submodule of $\mathbf{Q}^{n+1}$ with dimension $m+1$. Suppose that $E$ intersects $V_-$. Then we have that $\mathbf{P}(E\cap V_-)$ is a totally geodesic submanifold of $\qhs^n$ of real dimension $4m$ called \emph{$\mathbf{Q}$-subspace}. In particular, a quaternionic $2$-plane determines a \emph{$\mathbf{Q}$-line} whose ideal boundary is called \emph{chain}. The ideal boundary of a \emph{$\mathbf{Q}$-hyperplane} is called \emph{hyperchain}. If $L$ is a $\mathbf{Q}$-hyperplane, then the \emph{inversion} at $L$ is the involutive isometry of $\qhs^n$ which has $L$ as its  set of fixed points.
Given a $\mathbf{Q}$-hyperplane $L$ there exists a vector $\lambda\in V_+$ such that the inversion at $L$ is induced by the transformation
\begin{equation*}\label{inversion-geral}
\mbox{$X\mapsto X-2\lambda\frac{\langle\lambda,X\rangle}{\langle\lambda,\lambda\rangle}$, for all $X\in\mathbf{Q}^{n+1}$.}
\end{equation*}
So $L=\mathbf{P}(\lambda^{\perp}\cap V_-)$, where $\lambda^{\perp}=\{X\in\mathbf{Q}^{n+1}:\langle\lambda,X\rangle=0\}$.
For instance, is not hard to check that, in the disc model, $L=\{x\in\mathbf{D}^n:x_n=0\}$ is a $\mathbf{Q}$-hyperplane whose associated inversion fixes $x'$ and maps $x_n$ to $-x_n$. Passing to horospherical coordinates, $L=\{(\omega,\alpha,\beta):\mbox{$|\omega|^2+\alpha=1$ and $\beta=0$}\}$ and its inversion is given by
\begin{equation}\label{inversion}
\iota(\omega,\alpha,\beta)=\left(\omega(\alpha + |\omega|^2 + \beta)^{-1},\frac{\alpha}{|\alpha+|\omega|^2+\beta|^2},\frac{-\beta}{|\alpha+|\omega|^2+\beta|^2}\right).
\end{equation}
\subsection{Bisectors}\label{sec-bisectors}
Following \cite{apanasov-kim1,goldman-parker1} we present the basic notions related to bisectors.

Let $p_1,p_2\in\qhs^n$ be two distinct points. Then the \text{bisector} $\mathcal{B}=\mathcal{B}(p_1,p_2)$ consist of all points in $\qhs^n$ equidistant from $p_1$ and $p_2$:
\begin{equation*}\label{bisector}
\mathcal{B}=\{p\in\qhs^n:d(p,p_1)=d(p,p_2)\}.
\end{equation*}
Let $\Sigma=\Sigma(p_1,p_2)$ be the unique $\mathbf{Q}$-line containing $p_1$ and $p_2$. We say that $\Sigma$ is the \emph{$\mathbf{Q}$-spine} of $\mathcal{B}$. The \emph{spine} or (\emph{real spine}) of $\mathcal{B}$ is defined by
$\sigma=\sigma(p_1,p_2)=\Sigma\cap\mathcal{B}$. Note that $\sigma\simeq\rhs^3$ is a (real hyperbolic) bisector in $\Sigma\simeq\rhs^4$ orthogonal to the geodesic containing $p_1$ and $p_2$.

Let $\Pi_{\Sigma}:\qhs^n\to\Sigma$ be the \emph{orthogonal projection} on $\Sigma$, i.e. for all $s\in\Sigma$ the preimage $\Pi_{\Sigma}^{-1}(s)$ is equal to the $\mathbf{Q}$-hyperplane orthogonal to $\Sigma$ at $s$. It is due to Mostow-Giraud that (see \cite[Theorem 5.1.1]{goldman0} or \cite[Theorem 2.1]{apanasov-kim1}):
\[\mathcal{B}=\Pi_{\Sigma}^{-1}(\sigma).\]
The $\mathbf{Q}$-hyperplanes $\Pi_{\Sigma}^{-1}(s)$, $s\in\sigma$ are called \emph{slices} of $\mathcal{B}$.
In particular, $\mathcal{B}$ is a (real) hypersurface ruled by $\mathbf{Q}$-hyperplanes. Bisectors in $\qhs^n$  are all congruent because $Sp(n,1)$ acts transitively on the set of all equidistant points in $\qhs^n$. We can prove that the slices (respectively, $\mathbf{Q}$-spine and spine) depend intrinsically on the hypersurface $\mathcal{B}$ and not on the pair $p_1$, $p_2$. Actually, a bisector is completely determined by its spine.
The ideal boundary $\partial\mathcal{B}$ is diffeomorphic to $\mathbf{S}^{4n-2}$ and is called \emph{spinal sphere}. The foliation of $\mathcal{B}$ by its slices induces a foliation of its spinal sphere by hyperchains. The ideal boundary $\partial\sigma$ is diffeomorphic to $\mathbf{S}^2$ and is called \emph{vortical sphere}.
\begin{example}\label{canonical-bisector}
Consider the points $p_1=\begin{bmatrix}0'\\ \frac{1}{2}k\end{bmatrix}$ and $p_2=\begin{bmatrix}0'\\-\frac{1}{2}k\end{bmatrix}$ in $\mathbf{D}^n$. From (\ref{distance-affine-coordinates}) we get that 
\begin{equation*}
\mbox{$\cosh\left(\frac{1}{2}d(x,p_1)\right)=\frac{|2-\bar{x}_nk|}{\sqrt{3(1-|x|^2)}}$\, and\, $\cosh\left(\frac{1}{2}d(x,p_2)\right)=\frac{|2+\bar{x}_nk|}{\sqrt{3(1-|x|^2)}}$,}
\end{equation*}
for all $x\in\mathbf{D}^n$. Thus, $\mathcal{B}=\mathcal{B}(p_1,p_2)=\{x\in\mathbf{D}^n:\Re(kx_n)=0\}$. Passing to horospherical coordinates we obtain $\mathcal{B}=\{(\omega,\alpha,\beta):\Re(k\beta)=0\}$. Thus, the $\mathbf{Q}$-spine and spine of $\mathcal{B}$ are given respectively by
$\Sigma=\{(\omega,\alpha,\beta):\omega=0\}$ and $\sigma=\{(\omega,\alpha,\beta):\mbox{$\omega=0$ and $\Re(k\beta)=0$}\}$.
The orthogonal projection is given by $\Pi_{\Sigma}(\omega,\alpha,\beta)=(0,\alpha+|\omega|^2,\beta)$, so the slices are of the form
\[\mbox{$S_{(0,a,b)}=\{(\omega,\alpha,\beta):\mbox{$\alpha+|\omega|^2=a$ and $\beta=b$}\}$, for all $(0,a,b)\in\sigma$}.\]
The stabilizer in $\mathbf{P}Sp(n,1)$ of $\mathcal{B}$ is equal to the stabilizer of $\sigma$, which is isomorphic to
\[(\mathbf{Z}_2\ltimes N\cdot\mathbf{R}_{+})\cdot Sp(n-1)\cdot T^1,\]
where
\[N=\{h(\xi,\nu)\in\mathcal{H}^{4n-1}:\mbox{$\xi=0$ and $\Re(k\nu)=0$}\},\]
\[Sp(n-1)\cdot T^1=\{(B,\lambda)\in Sp(n-1)\times Sp(1):\mbox{$\lambda=\mathrm{e}^{kt}$, $t\in\mathbf{R}$}\}/\{\pm I_{n+1}\}\]
and $\mathbf{Z}_2$ is the cyclic group generated by the inversion in a slice of $\mathcal{B}$. The component at identity is $N\cdot\mathbf{R}_{+}\cdot Sp(n-1)\cdot T^1$. It follows that $\mathcal{B}$ has cohomogeneity one. In fact, $N\cdot\mathbf{R}_+$ acts free and transitively on $\sigma$ and $Sp(n-1)$ acts on each slice by `rotations' pointwise fixing $\sigma$. (In \cite{apanasov-kim1} it is stated without proof that bisectors in real, complex, quaternionic and octonionic hyperbolic space have cohomogeneity $0$, $1$, $3$ and $7$, respectively. It seems to us that the authors have overlooked the $N$-factor.)
\end{example}
\begin{proposition}\label{bisector-minimal}
Bisectors are minimal hypersurfaces.
\end{proposition}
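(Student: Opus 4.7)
Since all bisectors in $\qhs^n$ are congruent, it suffices to verify minimality for the canonical bisector $\mathcal{B}=\{(\omega,\alpha,\beta):\Re(k\beta)=0\}$ of Example~\ref{canonical-bisector}. The plan is a symmetry argument: produce an ambient isometry of $\qhs^n$ that preserves $\mathcal{B}$ yet reverses its unit normal along an entire slice, which forces the mean curvature vector $\vec{H}$ to vanish on that slice; then propagate the vanishing to all of $\mathcal{B}$ via the transitive action of the stabilizer on the set of slices.

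Consider the slice $L=S_{(0,1,0)}=\{x\in\mathbf{D}^n:x_n=0\}$ through the origin and the inversion $\iota=\iota_L$, whose explicit form in horospherical coordinates is given by~(\ref{inversion}). Since $\alpha+|\omega|^2$ is real and $\beta$ is purely imaginary, the third component of $\iota(\omega,\alpha,\beta)$ equals $-\beta/[(\alpha+|\omega|^2)^2+|\beta|^2]$, so $\Re(k\beta)$ is sent to its opposite divided by a positive real; hence $\iota$ preserves $\mathcal{B}$ and exchanges its two sides. At any $p\in L$ the isometry $\iota$ fixes $p$, acts as the identity on $T_pL$ and as $-\mathrm{id}$ on the four-dimensional normal space $N_pL\subset T_p\qhs^n$. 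Because $T_pL\subset T_p\mathcal{B}$, the unit normal $\nu_p$ of $\mathcal{B}$ in $\qhs^n$ lies in $N_pL$, and so $d\iota_p\,\nu_p=-\nu_p$. Since $\iota$ is an ambient isometry preserving $\mathcal{B}$ and fixing $p$, it sends $\vec{H}_p$ to itself; writing $\vec{H}_p=h(p)\nu_p$ forces $h(p)=-h(p)$, hence $\vec{H}_p=0$ for every $p\in L$.

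To propagate this vanishing, Example~\ref{canonical-bisector} observes that the subgroup $N\cdot\mathbf{R}_{+}$ of the stabilizer of $\mathcal{B}$ acts freely and transitively on the spine $\sigma$ and therefore transitively on the family of slices $\{S_{(0,a,b)}:(0,a,b)\in\sigma\}$ (any stabilizer isometry preserves $\Sigma$, commutes with $\Pi_{\Sigma}$, and so permutes slices in the manner prescribed by its action on $\sigma$). These isometries preserve $\mathcal{B}$ and so carry $\vec{H}$ to $\vec{H}$, transporting the vanishing from $L$ to every slice, and thus to all of $\mathcal{B}$ since the slices cover it. The only non-formal ingredient is the one-line check that $\iota$ preserves $\mathcal{B}$; this is the ``main obstacle'' solely in the sense of being the unique computational step, and it is dispatched directly from~(\ref{inversion}).
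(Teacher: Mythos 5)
Your argument is correct and follows essentially the same route as the paper: both exploit the reflection symmetry of the inversion in a slice to show the mean curvature vanishes along that slice, and then propagate the vanishing to all of $\mathcal{B}$ via the transitive action of the stabilizer. The only cosmetic differences are that you fix the canonical bisector from the start (invoking congruence) and make the verification that $\iota$ preserves $\mathcal{B}$ and the $N\cdot\mathbf{R}_{+}$-transitivity on slices explicit, whereas the paper treats an arbitrary bisector and leaves those checks implicit.
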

\begin{proof}
Let $\mathbb{H}$ be the mean curvature vector field of $\mathcal{B}$. Fix a slice $S$ of $\mathcal{B}$ and consider the inversion $\iota$ in $S$. Note that $\iota$ stabilizes $\mathcal{B}$, so $\mathbb{H}$ is $\iota$-invariant. Also, we have that $d\iota$ maps a normal vector at $S$ to its opposite. Then $\mathbb{H}$ is identically zero on $S$, since $S$ is the fixed point set of $\iota$. Finally $\mathcal{B}=\cup_{s\in S}G(s)$, where $G$ is the stabilizer of $\mathcal{B}$. So using again $g$-invariance of $\mathbb{H}$ for $g\in G$ we see that $\mathbb{H}\equiv 0$ on $\mathcal{B}$.

We note that an independent proof can be obtained from Remark~\ref{special-loxodromic-remark}~(\ref{special-loxodromic-explicit-solutions}) since bisectors are all congruent. 
\end{proof}
\subsection{Fans}\label{sec-fans}
For complex hyperbolic space, fans are introduced in \cite{goldman-parker1}. Following some of the ideas of this work we introduce fans in $\qhs^n$. First, consider the \emph{pencil} of all $\mathbf{Q}$-lines in $\qhs^n$ which are asymptotic to $\infty$. The pencil has a natural structure of $(n-1)$-dimensional \emph{quaternionic affine space}. In fact, in horospherical coordinates the $\mathbf{Q}$-line containing $p_0=(\omega_0,0,\beta_0)$ and $\infty$ in its ideal boundary is given by
\begin{equation}\label{pencil}
\Sigma(p_0)=\{(\omega,\alpha,\beta):\omega=\omega_0\}.
\end{equation}
Note that actually $\Sigma(p_0)$ depends solely on $\omega_0\in\mathbf{Q}^{n-1}$. Now, consider the projection $\Pi:\qhs^n\to\mathbf{Q}^{n-1}$ given (in horospherical coordinates) by \[\Pi(\omega,\alpha,\beta)=\omega.\]
\begin{definition}\label{fan-definition}
If $F\subset\mathbf{Q}^{n-1}$ is a real affine hyperplane, then its preimage
\[\mathcal{F}=\Pi^{-1}(F)\]
is called \emph{fan} with vertex at $\infty$. Since the inversion $\iota$ given by (\ref{inversion}) interchanges $(0,0,0)$ and $\infty$, we can use $\iota$ together with Heisenberg translations to define fans with vertex at an arbitrary point in $\partial\qhs^n$.
\end{definition}
\begin{proposition}\label{fans}
Fans are homogeneous, ruled, minimal hypersurfaces all congruent, diffeomorphic to $\mathbf{R}^{4n-1}$ and have ideal boundary homeomorphic to the sphere $\mathbf{S}^{4n-2}$.
\end{proposition}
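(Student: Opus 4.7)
The plan is to reduce to a canonical fan, then read off all properties from horospherical coordinates, postponing minimality which is the only nontrivial step. First, by Definition~\ref{fan-definition}, every fan is the image of a fan with vertex $\infty$ under an element of $\mathrm{Isom}(\qhs^n)$ (composition of $\iota$ and Heisenberg translations), so I may assume $\mathcal{F}=\Pi^{-1}(F)$ for a real affine hyperplane $F\subset\mathbf{Q}^{n-1}$. Among those, the formula $h(\xi,\nu)(\omega,\alpha,\beta)=(\xi+\omega,\ldots)$ lets me shift $F$ so that it passes through $0\in\mathbf{Q}^{n-1}$; the formula $B\cdot\lambda(\omega,\alpha,\beta)=(B\omega\lambda^{-1},\ldots)$ with $\lambda=1$, combined with the transitivity of $Sp(n-1)$ on the unit sphere of $\mathbf{Q}^{n-1}$, lets me further rotate the real unit normal of $F$ to the canonical direction. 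This brings $\mathcal{F}$ to $\mathcal{F}_0:=\Pi^{-1}(F_0)$ with $F_0=\{\omega:\Re\omega_1=0\}$, establishing that all fans are congruent and reducing every remaining claim to the canonical $\mathcal{F}_0$.

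The product description $\mathcal{F}_0=F_0\times\mathbf{R}_+\times\Im(\mathbf{Q})$ in horospherical coordinates immediately gives $\mathcal{F}_0\cong\mathbf{R}^{4n-5}\times\mathbf{R}\times\mathbf{R}^3\cong\mathbf{R}^{4n-1}$. Taking the closure of $\mathcal{F}_0$ in $\qhs^n\cup\partial\qhs^n$ attaches the ideal stratum $F_0\times\{0\}\times\Im(\mathbf{Q})\cong\mathbf{R}^{4n-2}$ together with the single point $\infty$ (every sequence in $\mathcal{F}_0$ leaving every compact set either has $\alpha\to 0$ or escapes to $\infty$), so $\partial\mathcal{F}_0$ is the one-point compactification of $\mathbf{R}^{4n-2}$, hence homeomorphic to $\mathbf{S}^{4n-2}$. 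The decomposition $\mathcal{F}_0=\bigsqcup_{\omega_0\in F_0}\Sigma(\omega_0)$ from (\ref{pencil}) exhibits $\mathcal{F}_0$ as a union of $\mathbf{Q}$-lines, all asymptotic to $\infty$, giving the ruled structure. For homogeneity, the solvable Lie subgroup $H=\{h(\xi,\nu):\xi\in F_0,\,\nu\in\Im(\mathbf{Q})\}\cdot\mathbf{R}_+$ preserves the equation $\Re\omega_1=0$ (since $\Re\xi_1=0$ and $\psi_t$ scales by a positive real), and the orbit of $p_0:=(0,1,0)$ under $\psi_t$ followed by $h(\xi,\nu)$ equals $(\xi,\mathrm{e}^{2t},\nu)$, sweeping out all of $\mathcal{F}_0$.

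The main obstacle is minimality. My strategy mimics, in a simpler setting, the symmetry argument of Proposition~\ref{bisector-minimal}: I would exhibit an isometry $\phi$ that preserves $\mathcal{F}_0$, fixes $p_0$, and acts as $-1$ on the one-dimensional normal $N_{p_0}\mathcal{F}_0$. Take $\phi$ induced by $B=\mathrm{diag}(-1,1,\ldots,1)\in Sp(n-1)$ and $\lambda=1$, so that $\phi(\omega,\alpha,\beta)=(B\omega,\alpha,\beta)=(-\omega_1,\omega_2,\ldots,\omega_{n-1},\alpha,\beta)$. Then $\phi$ preserves $\{\Re\omega_1=0\}$ and fixes $p_0$. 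At $p_0$, where $\omega=0$, the metric simplifies to $d\alpha^2+|d\beta|^2+4|d\omega|^2$; in particular the unit normal to $\mathcal{F}_0$ at $p_0$ points along $\partial/\partial(\Re\omega_1)$, and $d\phi_{p_0}$ sends this coordinate vector to its opposite. Comparing $d\phi_{p_0}(\mathbb{H}(p_0))=\mathbb{H}(p_0)$ (since $\phi$ is an isometry preserving $\mathcal{F}_0$) with $d\phi_{p_0}(\mathbb{H}(p_0))=-\mathbb{H}(p_0)$ (from the action on the normal line) forces $\mathbb{H}(p_0)=0$, and then $H$-invariance propagates this to $\mathbb{H}\equiv 0$ on all of $\mathcal{F}_0$. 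The subtle step is the identification of the Riemannian unit normal with the coordinate direction $\partial/\partial(\Re\omega_1)$; this works cleanly only because at $\omega=0$ the cross-term $2\Im(d\omega,\omega)$ in the metric vanishes, which is exactly why centering everything at $p_0$ is essential.
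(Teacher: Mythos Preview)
Your proof is correct and follows essentially the same route as the paper: reduce to a canonical fan at $\infty$, read off the topology, exhibit homogeneity via the solvable subgroup, and kill the mean curvature by a normal-reversing isometry plus transitivity. Two cosmetic differences are worth pointing out. First, the paper rules $\mathcal{F}$ by the $\mathbf{Q}$-\emph{hyperplanes} $M_\nu=\{\omega_{n-1}=\nu\}$, $\nu\in\Im(\mathbf{Q})$, in direct analogy with the slice decomposition of a bisector, rather than by the $\mathbf{Q}$-lines of the pencil as you do; both are legitimate rulings, but the hyperplane ruling is the one that feeds the minimality argument. Second, your reflection $\phi$ given by $B=\mathrm{diag}(-1,1,\ldots,1)\in Sp(n-1)$, $\lambda=1$, is precisely the inversion in the $\mathbf{Q}$-hyperplane $\{\omega_1=0\}$ (in the disk model it is $x_1\mapsto -x_1$), so what you have written is exactly the paper's ``replace $S$ by $M_0$'' step from Proposition~\ref{bisector-minimal}, just without naming it as an inversion. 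In particular your $\phi$ fixes that entire hyperplane pointwise, not only $p_0$, though as you observe homogeneity makes one fixed point suffice.
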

\begin{proof}
By congruence it suffices to consider fans with vertex at $\infty$. Using Heisenberg translations and rotations of $Sp(n-1)$ we see that fans with vertex at $\infty$ are all congruent. It follows from Definition~\ref{fan-definition} that fans are diffeomorphic to $\mathbf{R}^{4n-1}$ with ideal boundary homeomorphic to $\mathbf{S}^{4n-2}$. Next, consider the fan
\begin{equation}\label{fan-a}
\mathcal{F}=\{(\omega,\alpha,\beta):\Re(\omega_{n-1})=0\}.
\end{equation}
We have that $\mathcal{F}=\bigcup_{\nu\in\Im(\mathbf{Q})}M_{\nu}$, where $M_{\nu}=\{(\omega,\alpha,\beta):\omega_{n-1}=\nu\}$. Therefore $\mathcal{F}$ is a (real) hypersurface ruled by the $\mathbf{Q}$-hyperplanes $M_{\nu}$, $\nu\in\Im(\mathbf{Q})$. On the other hand $\mathcal{F}$ is the $G$-orbit of the base-point $(0,1,0)$, where
\[G=\{h(\xi,\nu)\cdot\psi_t\in\mathcal{H}^{4n-1}\cdot\mathbf{R}_+:\Re(\xi_{n-1})=0\}.\]
Hence fans are homogeneous. Minimality follows as in proof of Proposition~\ref{bisector-minimal} replacing $S$ by $M_0$ and using homogeneity.

We note that an independent proof of the minimality can be obtained from Remark~\ref{special-parabolic-remark}~(\ref{special-parabolic-explicit-solutions}).
\end{proof}
Recall that $S:=\mathcal{H}^{4n-1}\cdot\mathbf{R}_+$ is a \emph{solvable} Lie Group equipped with the \emph{left-invariant metric} induced from the inner product on its Lie algebra $\mathfrak{s}\subset\mathfrak{sp}(n,1)$. From $\S$\ref{iwasawa} it follows that $\mathfrak{s}=\mathfrak{n}\oplus\mathfrak{a}$, where  $\mathfrak{a}=\mathbf{R}$ and $\mathfrak{n}$ is the Lie algebra of $\mathcal{H}^{4n-1}$ which decomposes as $\mathfrak{v}\oplus\mathfrak{z}$, where $\mathfrak{v}$ and $\mathfrak{z}$ are identified with $\mathbf{Q}^{n-1}$ and $\Im(\mathbf{Q})$, respectively. Of course, $S$ acts freely and transitively on $\qhs^n$. In the following paragraph and Proposition~\ref{fans-foliation}, we identify $S$ with $\qhs^n$ via the \emph{orbit map} through $(\omega,\alpha,\beta)=(0,1,0)$.

Proposition 1 in \cite{berndt1} says that codimension-one Lie subalgebras of $\mathfrak{s}$ are of the form $\mathfrak{s}(\theta,V_0):=(\cos\theta V_0+\sin\theta A)^{\perp}$, for some $\theta\in\left(-\frac{\pi}{2},\frac{\pi}{2}\right]$ and some unit vector $V_0\in\mathfrak{v}$, where $A$ is a non-zero vector in $\mathfrak{a}$. Let $S(\theta,V_0)$ be the Lie subgroup of $S$ with Lie algebra $\mathfrak{s}(\theta,V_0)$. We have that $S(\theta,V_0)$ acts isometrically on $S$ with cohomogeneity one and its orbits form a \emph{Riemannian foliation} on $S$ which is denoted by $\mathfrak{F}(\theta,V_0)$. Clearly, $S\left(\frac{\pi}{2},V_0\right)=\mathcal{H}^{4n-1}$ and each leaf of $\mathfrak{F}\left(\frac{\pi}{2},V_0\right)$ is a horosphere. Now, let $\gamma:\mathbf{R}\to S$ be the geodesic with $\gamma(0)$ being the identity and $\gamma'(0)=V_0$. \cite[$\S$4]{berndt1} shows that the action of $S(0,V_0)$ on $S$ is polar and $\gamma(\mathbf{R})$ is a section. Moreover $S(\theta,V_0)=\gamma(t)^{-1}S(0,V_0)\gamma(t)$, where $\theta=-\arcsin\tanh\frac{t}{2}$. In particular, for each $\theta\in\left(-\frac{\pi}{2},\frac{\pi}{2}\right)$ the Riemannian foliation $\mathfrak{F}(\theta,V_0)$ is the left translation of $\mathfrak{F}(0,V_0)$ by $\gamma(t)$, where $t=2\mathrm{arctanh}(-\sin\theta)$. 
\begin{proposition}\label{fans-foliation}
Consider the notation above. If  $\mathcal{F}$ is the fan given by (\ref{fan-a}), then there exists some $V_0\in\mathfrak{v}$ such that $\mathcal{F}$ is isometric to $S(0,V_0)$.
\end{proposition}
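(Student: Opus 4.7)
The plan is to identify the subgroup $G$ from the proof of Proposition~\ref{fans} with $S(0,V_0)$ for a suitable $V_0\in\mathfrak{v}$, and then transport this identification through the orbit map $S\to\qhs^n$, $s\mapsto s\cdot(0,1,0)$.

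First, I would fix the natural candidate: under the identification $\mathfrak{v}\cong\mathbf{Q}^{n-1}$, take $V_0$ to be the unit vector with a $1$ in the real part of the $(n-1)$-th entry and zero elsewhere. With this choice, the real linear condition $\Re(\xi_{n-1})=0$ on $\xi\in\mathfrak{v}$ is precisely the condition $\langle\xi,V_0\rangle=0$ with respect to the inner product inherited from the left-invariant metric on $S$.

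Next, I would read off the Lie algebra of
\[G=\{h(\xi,\nu)\cdot\psi_t\in\mathcal{H}^{4n-1}\cdot\mathbf{R}_+:\Re(\xi_{n-1})=0\}\]
appearing in the proof of Proposition~\ref{fans}. Using the exponential parametrization from $\S\ref{iwasawa}$ and the decomposition $\mathfrak{s}=\mathfrak{v}\oplus\mathfrak{z}\oplus\mathfrak{a}$, the Lie algebra of $G$ is
\[\mathfrak{g}=\{\xi\in\mathfrak{v}:\Re(\xi_{n-1})=0\}\oplus\mathfrak{z}\oplus\mathfrak{a}=V_0^{\perp},\]
where the orthogonal complement is taken inside $\mathfrak{s}$. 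Comparing with the definition recalled just before the proposition, $\mathfrak{s}(\theta,V_0)=(\cos\theta\,V_0+\sin\theta\,A)^{\perp}$ with $\theta=0$ gives exactly $\mathfrak{s}(0,V_0)=V_0^{\perp}=\mathfrak{g}$. Since both $G$ and $S(0,V_0)$ are connected Lie subgroups of the connected solvable group $S$ with the same Lie algebra, they coincide.

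Finally, I would verify that under the orbit map $\phi:S\to\qhs^n$ through $(0,1,0)$, the image $\phi(G)$ equals $\mathcal{F}$. This is a direct computation from the formulas in $\S\ref{iwasawa}$: $(h(\xi,\nu)\psi_t)\cdot(0,1,0)=(\xi,\mathrm{e}^{2t},\nu)$, and the range over $(\xi,t,\nu)$ with $\Re(\xi_{n-1})=0$ traces out $\mathcal{F}$ as defined in (\ref{fan-a}). Since $\phi$ is an isometry (the left-invariant metric on $S$ is defined so that $\phi$ is an isometry onto $\qhs^n$), and left translations in $S$ are isometries of the induced metric on $S(0,V_0)$, the submanifold $\mathcal{F}\subset\qhs^n$ is isometric to $S(0,V_0)$. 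The only mild technical point is confirming the normalization so that the real-part condition on $\xi_{n-1}$ matches the orthogonality condition against the chosen $V_0$ under the inner product on $\mathfrak{s}$; this is just a matter of picking $V_0$ to be a real unit vector in the appropriate slot, so no genuine obstacle arises.
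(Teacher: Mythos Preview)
Your proposal is correct and follows essentially the same approach as the paper: the paper also chooses $V_0$ corresponding to the vector $v_0=\begin{bmatrix}0\\1\end{bmatrix}\in\mathbf{Q}^{n-1}$ (via $V_0=\frac{d}{dt}\big|_{t=0}h(tv_0,0)$), observes that $S(0,V_0)$ coincides with the group $G$ from Proposition~\ref{fans}, and concludes that $\mathcal{F}$ is the $S(0,V_0)$-orbit through $(0,1,0)$. Your version simply spells out the Lie-algebra identification $\mathfrak{g}=V_0^{\perp}=\mathfrak{s}(0,V_0)$ and the orbit computation $(h(\xi,\nu)\psi_t)\cdot(0,1,0)=(\xi,\mathrm{e}^{2t},\nu)$ that the paper leaves implicit.
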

\begin{proof}
In fact, consider the vector $v_0=\begin{bmatrix}0\\1\end{bmatrix}$ in $\mathbf{Q}^{n-1}$. Thus, the vector field
\[V_0=\left.\frac{d}{dt}\right|_{t=0}h(tv_0,0)\]
belongs in $\mathfrak{v}$. Note that $S(0,V_0)$ is equal to the group $G$ in the proof of Proposition~\ref{fans}. Hence, $\mathcal{F}$ is the $S(0,V_0)$-orbit through $(0,1,0)$.
\end{proof}
We conclude this section showing that fans can be seen as limits of bisectors as their vortical spheres collapse to the vertex of the fan:
\begin{example}\label{fans-example}
Consider the bisector $\mathcal{B}=\{(\omega,\alpha,\beta):\Re(k\beta)=0\}$ in Example~\ref{canonical-bisector}. For all $t\in\mathbf{R}$ let $h_t:=h(tv_0,0)$ as in the proof of Proposition~\ref{fans-foliation}. Applying the one-parameter group of Heisenberg translations $(h_t)_{t\in\mathbf{R}}$ to $\mathcal{B}$, we obtain the one-parameter family of bisectors $(\mathcal{B}_t)_{t\in\mathbf{R}}$ which are given by
\[\mathcal{B}_t=\{(\omega,\alpha,\beta):\Re(k(\beta-2t\omega_{n-1}))=0\}.\]
Their respective vortical spheres are
\[\partial\sigma_t=\{(\omega,\alpha,\beta):\mbox{$\omega=tv_0$ and $\Re(k\beta)=\alpha=0$}\}.\]
Then, letting $t\to+\infty$ we obtain the fan
\begin{equation*}\label{fan-b}
\mathcal{F}=\{(\omega,\alpha,\beta):\Re(k\omega_{n-1})=0\}.
\end{equation*}
Finally, consider the inversion $\iota$ given by (\ref{inversion}). Then the fan $\mathcal{F}'=\iota(\mathcal{F})$ has vertex at $(0,0,0)$ and
\begin{equation*}\label{fan-c}
\mathcal{F}'=\{(\omega,\alpha,\beta):\mbox{$\alpha>0$ and $\Re(k\omega_{n-1}(\alpha+|\omega^2|+\beta)^{-1})=0$}\}.
\end{equation*}
Writing $\beta=i\beta_1+j\beta_2+k\beta_3$ and $\omega_l=\omega_{l,0}+i\omega_{l,1}+j\omega_{l,2}+k\omega_{l,3}$, with $\beta_r,\omega_{l,m}\in\mathbf{R}$, for $r=1$, $2$, $3$, $m=0$, $1$, $2$, $3$  and $l=1,\dots,n-1$, we have that $(\omega,\alpha,\beta)\in\mathcal{F}'$ if and only if $\alpha>0$ and
\begin{align*}\label{fan-singularity}
0 =& \omega_{n-1,3}\left(\alpha+\sum_{l=1}^{n-1}(\omega_{l,0}^2+\omega_{l,1}^2+\omega_{l,2}^2+\omega_{l,3}^2)\right)\\
 +&\omega_{n-1,2}\beta_1-\omega_{n-1,1}\beta_2-\omega_{n-1,0}\beta_3.
\end{align*}
By computing the partial derivatives it is not hard to show that the only singular point of the above equation for $\alpha\geqq 0$ is $(\omega,\alpha,\beta)=(0,0,0)$ (compare with \cite[p.536]{goldman-parker1}).
\end{example}
\section{The basic reduction}
\subsection{Preliminaries}
Let $H$ be a Lie group acting properly and isometrically on a Riemannian manifold $M$. The principal orbit type theorem in \cite[p.86-87]{palais-terng} asserts that the union $M_r$ of all principal orbits in $M$ is an open, dense, invariant submanifold with connected orbit space $\Delta_r=H\backslash M_r$. The orbital metric on $\Delta_r$ is the one that makes $M_r\to\Delta_r$ into a Riemannian submersion. In fact, since the decomposition of $M$ into orbit types is locally finite, one gets a stratified Riemannian submersion $M\to\Delta=H\backslash M$. Moreover, when $(H,M)$ is polar then $\Delta$ is isometric to the \emph{orbifold} $W\backslash\Sigma$, where $\Sigma$ is a section and $W=W(\Sigma)$ is its \emph{generalized Weyl group}. In this case the orbital metric is the induced metric of $M$ in $\Sigma$.

The \emph{volume functional} measures the volume element of the principal orbits. It is a continuous function on $\Delta$, differentiable on $\Delta_r$ and null on the singular orbits (see \cite{hsiang-lawson1}). The volume functional is computed as follows. Let $H/K$ be the principal orbit type. Then each principal orbit is a homogeneous Riemannian manifold of $H/K$-type with induced metric from $M$ which is completely determined by its value at the base-point. Let $\mathfrak{h}$ be the Lie algebra of $H$ equipped with the corresponding $\mathrm{Ad}_K$-invariant inner product, and let $\mathfrak{p}$ be the orthogonal complement in $\mathfrak{h}$ to the Lie algebra of $K$. Choose an orthogonal basis $\{X_1,\dots,X_m\}$ for $\mathfrak{p}$ and denote by $X_i^*$, $i=1,\dots,m$, the induced Killing vector fields on $M$. The volume functional $\mathcal{V}$ of $(H,M)$ is given by (see \cite{hsiang})
\begin{lemma}\label{volume}
The volume element of the orbit through $p\in M$ is given by $\mathcal{V}(p)d(H/K)$, where
\begin{equation*}\label{volume-functional}
\mathcal{V}(p)=|X_1^*(p)\wedge\cdots\wedge X_m^*(p)|
\end{equation*}
and $d(H/K)$ is the volume element of $H/K$.
\end{lemma}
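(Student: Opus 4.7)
The orbit $H\cdot p$ is diffeomorphic to $H/K$ via the orbit map $\pi_p\colon H/K\to H\cdot p$, $hK\mapsto h\cdot p$ (here $K$ is the isotropy at $p$), and I would equip $H/K$ with its reductive normal metric, i.e.\ the unique $H$-invariant Riemannian metric making the identification $T_{eK}(H/K)\cong\mathfrak{p}$ an isometry; this metric exists precisely because the inner product on $\mathfrak{p}$ is $\mathrm{Ad}_K$-invariant, and it determines the volume form $d(H/K)$. The claim then reduces to showing that the Jacobian of $\pi_p$ at the base point $eK$ equals $\mathcal{V}(p)$.

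The first step is to identify $(d\pi_p)_{eK}$. Since $\pi_p(\exp(tX)K)=\exp(tX)\cdot p$ for $X\in\mathfrak{p}$, differentiating at $t=0$ gives $(d\pi_p)_{eK}(X)=X^{*}(p)$. In particular, the orthogonal basis $\{X_1,\dots,X_m\}$ of $\mathfrak{p}$ is sent to the tuple $\{X_1^{*}(p),\dots,X_m^{*}(p)\}$, which spans $T_p(H\cdot p)$ because $\pi_p$ is a local diffeomorphism.

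The second step is the linear-algebraic fact that the Riemannian volume form on an $m$-dimensional inner product space, evaluated on a frame $(v_1,\dots,v_m)$, equals $|v_1\wedge\cdots\wedge v_m|$. Applying this inside $T_p(H\cdot p)$ yields $|X_1^{*}(p)\wedge\cdots\wedge X_m^{*}(p)|=\mathcal{V}(p)$, while the same rule inside $\mathfrak{p}$ produces the constant $|X_1\wedge\cdots\wedge X_m|_{\mathfrak{p}}$, to be absorbed into the normalization of $d(H/K)$. Their ratio is the desired Jacobian, so $\pi_p^{*}(\mathrm{vol}_{H\cdot p})=\mathcal{V}(p)\,d(H/K)$ at $eK$; the $H$-equivariance $\pi_p\circ L_h=h\circ\pi_p$ (with $L_h$ isometric on $H/K$ and $h$ isometric on $M$) then propagates the identity over the whole orbit. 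The only real obstacle is bookkeeping: making the normalization of $d(H/K)$ compatible with the orthogonal (rather than orthonormal) basis $\{X_i\}$ so that the scalar factor comes out exactly $\mathcal{V}(p)$; the underlying geometry is immediate once one knows $(d\pi_p)_{eK}(X)=X^{*}(p)$.
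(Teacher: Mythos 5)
The paper does not actually prove this lemma; it states it as a known fact with a citation to \cite{hsiang}, which is why you had nothing in the text to compare against. Your argument is the standard derivation that the cited reference (and Hsiang--Lawson's original paper) would give, and it is correct: the key facts are that $(d\pi_p)_{eK}(X)=X^*(p)$ for $X\in\mathfrak{p}$ (so the orbit map sends the chosen basis of $\mathfrak{p}$ to $\{X_i^*(p)\}$), that the volume form of an inner-product space evaluated on a basis is the Gram-determinant square root $|v_1\wedge\cdots\wedge v_m|$, and that $H$-equivariance of $\pi_p$ forces the Jacobian to be the constant computed at $eK$.

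One small but genuine loose end you correctly flagged, and which you should resolve rather than wave at: the paper says ``orthogonal'' basis, and with a merely orthogonal $\{X_i\}$ your ratio of volume forms picks up the constant $|X_1\wedge\cdots\wedge X_m|_{\mathfrak{p}}=\prod_i|X_i|$ in the denominator, so the literal identity $\pi_p^*(\mathrm{vol}_{H\cdot p})=\mathcal{V}(p)\,d(H/K)$ requires that $d(H/K)$ be normalized to take value $1$ on $(X_1,\dots,X_m)$, or equivalently that the basis be \emph{orthonormal}. The cleanest fix is to take $\{X_i\}$ orthonormal (almost certainly the intended reading, and what \cite{hsiang} uses), in which case the constant is $1$ and the Jacobian is exactly $\mathcal{V}(p)$. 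For the paper's later use of the lemma this never matters, since only $\frac{d}{d\xi}\ln\mathcal{V}$ enters the reduced ODE and a multiplicative constant drops out; but a proof of the lemma as literally stated should make the normalization explicit rather than leave it implicit.
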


Assume in the following that the action of $H$ on $M$ have \emph{cohomogeneity} two, i.e., the principal orbits have codimension two. Let $\Gamma$ be a $H$-equivariant hypersurface and let $\gamma=H\backslash\Gamma$ its generating curve in $\Delta$. Then (see \cite{back-docarmo-hsiang1,hsiang})
\begin{lemma}[Reduced ODE]\label{reduction-theorem}
The mean curvature $\mathbf{h}$ of $\Gamma$ is given by the following formula
\begin{equation}\label{reduced-ode}
\mathbf{h}= \kappa_g - \frac{d}{d\xi}\ln(\mathcal{V}),
\end{equation}
where $\kappa_g$ is  the geodesic curvature and $\xi$ is the positive unit normal of $\gamma$ respect to the orbital metric.
\end{lemma}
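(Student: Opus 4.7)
The plan is to reduce the computation to what happens at a principal orbit point lying on the generating curve: there I would decompose the mean curvature of $\Gamma$ into the tangential contribution from $\gamma$ and the normal contribution from the orbit, and then invoke the first variation of area to recognise the orbit's mean curvature as $-\frac{d}{d\xi}\ln\mathcal{V}$. Since $H$ acts by isometries and preserves $\Gamma$, the scalar $\mathbf{h}$ is constant along each orbit, so this suffices to determine $\mathbf{h}$ on all of $\Gamma_r=\Gamma\cap M_r$, and hence on $\Gamma$ by continuity.

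First I would set up the local picture at a principal orbit point $p\in\tilde\gamma$, where $\tilde\gamma\subset\Sigma$ is the horizontal lift of $\gamma$ to the section $\Sigma$. Since $\Sigma$ meets every orbit orthogonally, $T_pM=T_p\Sigma\oplus T_p(H\cdot p)$ is exactly the horizontal-vertical decomposition of the Riemannian submersion $M_r\to\Delta_r$. Consequently $T_p\Gamma=T_p\tilde\gamma\oplus T_p(H\cdot p)$, and the unit normal of $\Gamma$ at $p$ is the lift $\tilde\xi\in T_p\Sigma$ of the orbital unit normal $\xi$ of $\gamma$. Choosing an orthonormal frame $\{T,v_1,\dots,v_m\}$ for $T_p\Gamma$, with $T$ tangent to $\tilde\gamma$ and $\{v_i\}$ tangent to the orbit, one has
\begin{equation*}
\mathbf{h}(p)=\langle\nabla_T^M T,\tilde\xi\rangle+\sum_{i=1}^m\langle\nabla_{v_i}^M v_i,\tilde\xi\rangle.
\end{equation*}

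The first summand equals $\kappa_g$: the section $\Sigma$ of a polar action is totally geodesic, hence $\nabla_T^M T=\nabla_T^\Sigma T$; and the quotient map $\Sigma\to W\backslash\Sigma=\Delta$ is a local isometry off the singular stratum, so the geodesic curvature of $\tilde\gamma$ in $\Sigma$ equals $\kappa_g$. The second summand is the mean curvature of the orbit $H\cdot p$ in the direction $\tilde\xi$. I would extend $\tilde\xi$ to a vector field normal to the nearby orbits (by horizontally lifting along $\Sigma$ and then propagating by $H$) and consider the flow $\phi_t$ of this extension. Because each orbit $H\cdot\phi_t(p)$ is homogeneous with $H$-invariant induced metric, Lemma~\ref{volume} gives $\mathrm{Vol}(H\cdot\phi_t(p))=\mathcal{V}(\phi_t(p))\cdot\mathrm{Vol}(H/K)$, while the first variation of area applied to this normal deformation yields
\begin{equation*}
\left.\frac{d}{dt}\right|_{t=0}\mathrm{Vol}(H\cdot\phi_t(p))=-\left(\sum_{i=1}^m\langle\nabla_{v_i}^M v_i,\tilde\xi\rangle\right)\mathrm{Vol}(H\cdot p).
\end{equation*}
Dividing by the orbit volume identifies the orbit mean curvature in direction $\tilde\xi$ with $-\frac{d}{d\xi}\ln\mathcal{V}$; substituting yields the formula.

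The step requiring the most care is the identification of the orbit's mean curvature with $-\frac{d}{d\xi}\ln\mathcal{V}$. One must pin down sign conventions in the first variation formula, ensure that the deformation vector is genuinely normal to each nearby orbit, and address the potential non-compactness of orbits (handled by a cut-off, or by observing that homogeneity makes the orbit's mean curvature constant, reducing the statement to a pointwise identity in the deformation parameter). Once this is settled, the remainder of the argument is routine, and principal orbits suffice by the density of $M_r$ in $M$.
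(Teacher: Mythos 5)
The paper does not prove Lemma~\ref{reduction-theorem}; it cites \cite{back-docarmo-hsiang1,hsiang} for it, and your argument is precisely the one found there: split the mean curvature at a point of the horizontal lift of $\gamma$ into the geodesic-curvature term (which equals $\kappa_g$ because the section is totally geodesic and carries the orbital metric) and the orbit's mean curvature in the direction $\tilde\xi$, and recognise the latter as $-\tfrac{d}{d\xi}\ln\mathcal{V}$ via first variation of the orbit volume density. Your flag about non-compact orbits is exactly the right one; the clean way to dispose of it is to note that Lemma~\ref{volume} is a statement about the volume \emph{element}, so the first-variation computation should be done on a fixed compact patch $U\subset H/K$ pushed forward by the orbit maps, comparing $\mathcal{V}(\phi_t(p))\,\mathrm{Vol}(U)$ rather than total orbit volumes, after which $\mathrm{Vol}(U)$ cancels and the identity becomes pointwise.
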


Further, suppose that $(H,M)$ is polar. The orbifold $\Delta$ has, in general, non-empty boundary $\partial\Delta$ which is composed by strata with codimension one and two (corresponding to singular orbits). The reduced ODE~(\ref{reduced-ode}) is singular in $\partial\Delta$, since the volume functional is identically null on this set. However we can consider solutions emanating orthogonally from the codimension one strata. In fact, these solutions are the most interesting.
\begin{lemma}\label{boundary}
Let $z_0\in\partial\Delta$ be a point in a codimension one stratum. Then there exists a unique solution $\gamma_{z_0}$ of (\ref{reduced-ode}) with initial condition $\gamma_{z_0}(0)=z_0$ and it is necessarily perpendicular to $\partial\Delta$ at $z_0$. Furthermore, there exists a neighborhood of $(z_0,0)$ in $\partial\Delta\times\mathbf{R}$ such that $\gamma_z(t)=\gamma(z,t)$ is analytic. Finally, the generated hypersurface is smooth.
\end{lemma}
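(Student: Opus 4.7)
The approach is to analyse the reduced ODE locally near $z_0$ in the section, using that the orbit-space structure forces the volume functional to vanish to first order at $\partial\Delta$, and then to apply the classical theory of ODEs with a regular singular point. I would first pick a preimage $p_0\in M$ of $z_0$ and introduce Fermi coordinates $(x,y)$ on the section $\Sigma$ centred at $p_0$, arranged so that the codimension-one stratum is $\{y=0\}$ and the interior of $\Delta$ is $\{y>0\}$. Since $z_0$ lies on a codimension-one stratum, the isotropy $H_{p_0}$ is larger than the principal one by a circle factor acting as $SO(2)$-rotations on the plane of the slice at $p_0$ transverse to $\Sigma$. Applying Lemma~\ref{volume} then gives
\[
\mathcal{V}(x,y)=y\,\phi(x,y),
\]
with $\phi$ real analytic and $\phi(x,0)>0$. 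Parameterising the generating curve by arc length as $(x(s),y(s))$ with $\dot x=\cos\theta$, $\dot y=\sin\theta$ and $\xi=(-\sin\theta,\cos\theta)$, equation~(\ref{reduced-ode}) in the minimal case $\mathbf{h}=0$ becomes
\[
\dot\theta=\frac{\cos\theta}{y}+R(x,y,\theta),
\]
with $R$ real analytic on its domain. Boundedness of $\dot\theta$ together with $y(0)=0$ forces $\cos\theta(0)=0$, which is exactly the perpendicularity of $\gamma_{z_0}$ to $\partial\Delta$ claimed in the statement.

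To solve the singular equation, I would set $\theta=\pi/2+\alpha$ with $\alpha(0)=0$, so that $\dot y=\cos\alpha$ gives $y(s)=s+O(s^{3})$ near $s=0$, and substitute to obtain a Fuchsian-type system
\[
s\,\dot\alpha+\alpha=s\,F(s,x,\alpha),\qquad \dot x=-\sin\alpha,\qquad x(0)=x_0,\ \alpha(0)=0,
\]
with $F$ analytic. Multiplying the first equation by the integrating factor $s$ and integrating yields
\[
s\,\alpha(s)=\int_0^{s}\tau\,F\bigl(\tau,x(\tau),\alpha(\tau)\bigr)\,d\tau,
\]
which, coupled with $x(s)=x_0-\int_0^{s}\sin\alpha(\tau)\,d\tau$, defines a contraction on a suitable ball of continuous pairs $(x,\alpha)$ with $\alpha/s$ bounded, yielding local existence and uniqueness. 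Analyticity in $s$ and analytic dependence on $z$ (through $x_0=x_0(z)$) then follow from the classical majorant method for analytic ODEs with a regular singular point: the power-series ansatz $\alpha=\sum_{k\ge1}a_k(x_0)s^k$, $x=x_0+\sum_{k\ge1}b_k(x_0)s^k$ gives a triangular recursion in which the indicial factors $k+1$ never vanish, and a standard majorant estimate gives uniform convergence for $z$ in a neighbourhood of $z_0$.

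Finally, smoothness of the generated hypersurface $\Gamma=H\cdot\gamma_{z_0}$: outside the singular orbit $H\cdot p_0$ it is immediate from the fact that $M_r\to\Delta_r$ is a smooth Riemannian submersion along which $\gamma_{z_0}$ is analytic. Across the singular orbit, I would use the slice theorem: a neighbourhood of $H\cdot p_0$ in $M$ is equivariantly diffeomorphic to $H\times_{H_{p_0}}(\text{slice})$, and the piece of $\Gamma$ inside the slice is the $SO(2)$-rotation of the analytic curve $\gamma_{z_0}$ meeting the rotation axis perpendicularly -- a classical picture producing a smooth rotationally symmetric hypersurface through the axis. The main obstacle I anticipate is the Fuchsian analysis of the second step: the vanishing of $\mathcal{V}$ on $\partial\Delta$ makes the ODE degenerate at the initial time, and one must simultaneously single out the unique bounded branch among the formal solution family and show that this branch inherits the analyticity of the data despite the singular coefficient; the rest -- perpendicularity from boundedness of $\dot\theta$, and smoothness of $\Gamma$ -- are then essentially corollaries.
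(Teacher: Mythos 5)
Your overall strategy -- localize near the singular stratum, recognize the reduced ODE as Fuchsian at the boundary, apply the majorant method for uniqueness, analyticity, and parameter-dependence, and use equivariance plus perpendicularity for smoothness -- is the same route the paper takes: it quotes Lemma~\ref{analytic}, which is precisely the Fuchsian power-series result, and derives Lemma~\ref{boundary} from it. However, there is a concrete error in your setup that you should fix: the claim that the isotropy at $p_0$ jumps by only a circle factor and that consequently $\mathcal{V}(x,y)=y\,\phi(x,y)$ with $\phi(x,0)>0$ is false here. For a cohomogeneity-two polar action the singular orbit over a codimension-one stratum can have codimension considerably larger than $3$, and the slice representation is not an $SO(2)$-rotation on a $2$-plane but a cohomogeneity-one representation on a higher-dimensional transverse space. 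Correspondingly $\mathcal{V}$ vanishes to some integer order $N\geq 1$, and in all cases actually treated in the paper $N$ is well above $1$: e.g.\ $\mathcal{V}\sim(\sin\theta)^{4n-4m-1}$ near $\theta=0$ in the elliptic case, and $\mathcal{V}\sim(\sin\theta)^{4n-5}$ in the special loxodromic case, so the minimal order appearing is $3$. This matters for the Fuchsian reduction: your equation $s\dot\alpha+\alpha=sF$, with indicial factors $k+1$, corresponds to $\lambda=-1$, whereas the correct form is $s\dot\alpha+N\alpha=sF$ with indicial factors $k+N$, i.e.\ $\lambda=-N$ (compare the explicit occurrence of $-(4n-5)$ in equation~(\ref{ode-analytic})). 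The hypothesis of Lemma~\ref{analytic} that $\lambda$ not be a positive integer is still met since $\lambda<0$, so the contraction/majorant step goes through once you replace the factor $1$ by $N$, and the perpendicularity conclusion (boundedness of $\dot\theta$ forces $\cos\theta(0)=0$) is unchanged. The smoothness argument via the slice theorem also survives, but with the $SO(2)$ rotating a half-plane replaced by the actual cohomogeneity-one slice representation, producing a generalized surface of revolution through the axis; perpendicularity is again the condition that makes this smooth. In short: the proof strategy is right and matches the paper's, but the linear-vanishing claim for $\mathcal{V}$ and the resulting indicial exponent $-1$ are wrong and need to be replaced by the true order of vanishing $N$ and exponent $-N$.
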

Lemma \ref{boundary} is obtained from the following technical result, which is proved by the well know technique of power series substitution and majoration \cite[Proposition 1]{hsiang-hsiang2}
\begin{lemma}\label{analytic}
There exists a unique analytic solution $y=y(t,x)$ for the following system (\ref{power-series}) which is a convergent power series of $(t,x)$ in a neighborhood of $(0,0)$ and $y(t,0)=0$, $\frac{dy}{dx}(t,0)=p(t,0)=0$
\begin{equation}\label{power-series}
\begin{aligned}
\frac{dy}{dx}=&p\\
x\frac{dp}{dx}=&\lambda p + a_{0,1,0,0} x +\psi(t,x,y,p), 
\end{aligned}
\end{equation}
where $\lambda$ is not a positive integer, $t$ is a parameter, and 
\[\psi=\sum_{\overset{l+m+n+\nu\geq2}{m+n+\nu\geq1}}a_{l,m,n,\nu}t^lx^my^np^{\nu}.\] 
\end{lemma}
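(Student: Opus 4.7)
The plan is to follow the classical scheme of formal power series substitution combined with the method of majorants. I would look for a formal solution of the form
\[
y(t,x)=\sum_{i\geq 0,\,j\geq 2}c_{ij}\,t^{i} x^{j},
\]
where the restriction $j\geq 2$ already encodes the prescribed initial conditions $y(t,0)=0$ and $p(t,0)=\partial_{x} y(t,0)=0$. Setting $p=\partial_{x} y$ and inserting into the second equation of~(\ref{power-series}), the coefficient of $t^{i}x^{j-1}$ arising from $x\,\partial_{x} p-\lambda p$ is $j(j-1-\lambda)\,c_{ij}$. The combinatorial point is that every monomial of $\psi$ satisfies $m+n+\nu\geq 1$, and since $y$, $p$ begin at orders $x^{2}$, $x$ respectively, each factor of $y$ or $p$ raises the $x$-degree by at least one; thus the coefficient of $t^{i}x^{j-1}$ contributed by $\psi$ is a polynomial in the data $a_{l,m,n,\nu}$ and in the previously determined $c_{i'j'}$ with $j'<j$. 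One obtains a triangular recursion
\[
j(j-1-\lambda)\,c_{ij}=F_{ij}\bigl(a_{l,m,n,\nu},\,\{c_{i'j'}\}_{j'<j}\bigr),
\]
with the inhomogeneous term $a_{0,1,0,0}$ entering only at the step $j=2$. Since $\lambda\notin\{1,2,3,\ldots\}$, the divisor $j(j-1-\lambda)$ is nonzero for every $j\geq 2$, so the $c_{ij}$ are uniquely determined by induction on $j$, yielding existence and uniqueness of the formal solution.

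For convergence I would apply the method of majorants. Analyticity of $\psi$ near the origin gives constants $M,r>0$ with $|a_{l,m,n,\nu}|\leq Mr^{-(l+m+n+\nu)}$, so that $\psi$ is dominated termwise by a geometric-series majorant
\[
\widetilde{\Psi}(t,x,y,p)=\frac{M}{1-(t+x+y+p)/r}-M-\frac{M(t+x+y+p)}{r},
\]
restricted to the monomials respecting $l+m+n+\nu\geq 2$ and $m+n+\nu\geq 1$. I would then consider the auxiliary majorant problem
\[
x\,\partial_{x}\tilde{p}=-C\tilde{p}+|a_{0,1,0,0}|\,x+\kappa\,\widetilde{\Psi}(t,x,\tilde{y},\tilde{p}),\qquad \tilde{y}(t,0)=0,\ \tilde{p}(t,0)=0,
\]
with constants $C>0$ and $\kappa\geq 1$ chosen so that $j(j-1+C)\leq\kappa\,|j(j-1-\lambda)|$ for all $j\geq 2$. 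Termwise dominance in the recursion then propagates to $\tilde{c}_{ij}\geq|c_{ij}|$ with all $\tilde{c}_{ij}\geq 0$. Because the right-hand side of the majorant equation depends on its variables only through the single combination $s=t+x+\tilde{y}+\tilde{p}$, the problem reduces to an implicit algebraic relation for $\tilde{y}$, to which the analytic implicit function theorem applies; this yields an analytic $\tilde{y}(t,x)$ in a neighbourhood of $(0,0)$, and hence the original series $\sum c_{ij}t^{i}x^{j}$ converges in a polydisc around $(0,0)$.

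The main obstacle I foresee is the careful book-keeping of the majorant comparison. The divisor $j(j-1-\lambda)$ changes sign as $j$ crosses $\lambda+1$, and its absolute value can be made small when $\lambda$ is close to a positive integer; the existence of a finite constant $\kappa$ as above is equivalent to the uniform lower bound $|j-1-\lambda|\geq\mathrm{dist}(\lambda,\mathbf{Z}_{>0})>0$. This estimate is precisely where the hypothesis $\lambda\notin\mathbf{Z}_{>0}$ is used quantitatively, and it also explains the sharpness of the hypothesis, since any positive integer value of $\lambda$ would produce a vanishing divisor and block the recursion at that step.
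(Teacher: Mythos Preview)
Your approach is precisely what the paper indicates: it gives no proof of its own but cites \cite[Proposition~1]{hsiang-hsiang2} and describes the argument as ``the well known technique of power series substitution and majoration.''

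Two small points deserve attention. First, the recursion is not strictly triangular in $j$ alone: monomials $a_{l,0,0,1}\,t^{l}p$ with $l\geq 1$ are allowed by the hypotheses on $\psi$, and they contribute $j\,a_{l,0,0,1}\,c_{i-l,\,j}$ (same $j$, strictly smaller $i$) to the coefficient of $t^{i}x^{j-1}$; the fix is simply a double induction, outer on $j$ and inner on $i$, so uniqueness and formal existence survive unchanged. Second, your reduction of the majorant problem to an implicit algebraic relation in $s=t+x+\tilde y+\tilde p$ does not quite work as written, since the operator $x\,\partial_{x}$ does not factor through $s$ and the inhomogeneous term $|a_{0,1,0,0}|\,x$ depends on $x$ alone; the standard way to close the majorant step is either to seek a majorant solution of a special form (for instance a function of $t+x$ only, after further enlarging the right-hand side so that it too depends only on $t+x+\tilde y+\tilde p$) and solve the resulting scalar equation explicitly, or to prove a geometric bound $|c_{ij}|\leq K\rho^{-(i+j)}$ directly by induction from the recursion together with the Cauchy estimates $|a_{l,m,n,\nu}|\leq M r^{-(l+m+n+\nu)}$. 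Either route is routine once set up correctly.
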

\subsection{The elliptic case}\label{elliptic}
For each $m=1,\dots,n-1$, we shall consider the following subgroup of $Sp(n,1)$:
\[H=Sp(m)\times Sp(n-m)=\left\{\begin{bmatrix}A&0&0\\0&B&0\\0&0&1\end{bmatrix}\in Sp(n,1):\begin{matrix}A\in Sp(m)\\B\in Sp(n-m)\end{matrix}\right\}.\]
Using the disk model, it is easy to see that
\[\Sigma=\{x\in\mathbf{D}^n:\mbox{$x_m$, $x_n\in\mathbf{R}$ and $x_l=0$ for $l\neq m$, $n$}\}\simeq\rhs^2\]
is a section for $(H,\qhs^n)$ and the orbit space is isometric to
\[\Delta=\{(u,v)\in\mathbf{R}^2:\mbox{$u^2+v^2\leq 1$ and $u$, $v\geq 0$}\},\]
where the canonical projection $\mathbf{D}^n\to\Delta$ is given by
\[x\mapsto\left(\sqrt{\sum_{l=1}^{m}|x_l|^2},\sqrt{\sum_{l=m+1}^n|x_l|^2}\right).\]
The orbital metric on $\Delta$ is
\[ds^2=\frac{4}{1-u^2-v^2}\{(1-v^2)du^2+2uvdudv+(1-u^2)dv^2\},\]
that is, the (real) hyperbolic metric of constant curvature $-\frac{1}{4}$. Also, from Lemma~\ref{volume}, the volume functional at $(u,v)\in\Delta$ is
\[\mathcal{V}=\frac{u^{4m-1}v^{4n-4m-1}}{(1-u^2-v^2)^{\frac{4n+1}{2}}}.\]
Passing to polar coordinates $u=\tanh r\cos\theta$ and $v=\tanh r\sin\theta$, we can write
\begin{align*}
ds^2 &= 4(dr^2+\sinh^2rd\theta^2)\\
\mathcal{V} &= (\sinh r)^{4n-5}(\sinh2r)^3(\sin\theta)^{4n-8m}(\sin2\theta)^{4m-1},
\end{align*}
for $r\geq 0$ and $0\leq\theta\leq\frac{\pi}{2}$. Finally, Lemma~\ref{reduction-theorem} gives
\begin{proposition}\label{elliptic-case}
Let $\gamma(s)=(r(s),\theta(s))$ be a curve in $\Delta$ parametrized by arc length, and let $\sigma(s)$ be the angle between $\frac{\partial}{\partial r}$ and the tangent unit vector $\frac{d\gamma}{ds}$. Then the hypersurface $\Gamma$ with $H\backslash\Gamma=\gamma$ has mean curvature $\mathbf{h}$ if and only if
\begin{equation}\label{elliptic-ode}
\begin{aligned}
\frac{dr}{ds} &= \frac{1}{2}\cos\sigma\\
\frac{d\theta}{ds} &= \frac{1}{2}\frac{\sin\sigma}{\sinh r}\\
\frac{d\sigma}{ds} &= \left((2n-4m)\cot\theta+(4m-1)\cot 2\theta\right)\frac{\cos\sigma}{\sinh r}\\
                   &-((2n-2)\coth r+3\coth 2r)\sin\sigma +\mathbf{h}.
\end{aligned}
\end{equation}
\end{proposition}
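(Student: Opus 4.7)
The proposition is a direct application of Lemma~\ref{reduction-theorem} once the orbit space geometry has been made explicit, so my plan is to expand the formula $\mathbf{h}=\kappa_g-\tfrac{d}{d\xi}\ln\mathcal{V}$ in polar coordinates $(r,\theta)$, handling the three ingredients (arc-length tangent, geodesic curvature, normal logarithmic derivative of $\mathcal{V}$) one by one.

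First I would fix the orthonormal frame $E_1=\tfrac{1}{2}\partial_r$, $E_2=\tfrac{1}{2\sinh r}\partial_\theta$ adapted to the orbital metric $ds^2=4(dr^2+\sinh^2 r\,d\theta^2)$, with dual coframe $\theta^1=2dr$, $\theta^2=2\sinh r\,d\theta$. Writing the arc-length tangent as $T=\cos\sigma\,E_1+\sin\sigma\,E_2$ immediately yields the first two ODEs $\dot r=\tfrac{1}{2}\cos\sigma$ and $\dot\theta=\tfrac{\sin\sigma}{2\sinh r}$. The positive unit normal is then $\xi=-\sin\sigma\,E_1+\cos\sigma\,E_2$.

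Second, I would compute the Levi-Civita connection form from the structure equation $d\theta^i+\omega^i_j\wedge\theta^j=0$. Since $d\theta^1=0$ and $d\theta^2=2\cosh r\,dr\wedge d\theta$, one finds $\omega^1_2=-\cosh r\,d\theta$. Plugging into $\nabla_T T=\dot\sigma(-\sin\sigma E_1+\cos\sigma E_2)+\cos\sigma\nabla_TE_1+\sin\sigma\nabla_TE_2$ and collecting terms gives
\[
\kappa_g=\langle\nabla_TT,\xi\rangle=\dot\sigma+\cosh r\,\dot\theta=\dot\sigma+\tfrac{1}{2}\coth r\sin\sigma.
\]

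Third, I would differentiate
\[
\ln\mathcal{V}=(4n-5)\ln\sinh r+3\ln\sinh 2r+(4n-8m)\ln\sin\theta+(4m-1)\ln\sin 2\theta
\]
to obtain $\partial_r\ln\mathcal{V}=(4n-5)\coth r+6\coth 2r$ and $\partial_\theta\ln\mathcal{V}=(4n-8m)\cot\theta+2(4m-1)\cot 2\theta$, and compute $\tfrac{d}{d\xi}\ln\mathcal{V}=\xi(\ln\mathcal{V})$ using $\xi=-\tfrac{\sin\sigma}{2}\partial_r+\tfrac{\cos\sigma}{2\sinh r}\partial_\theta$. Substituting into $\dot\sigma=\mathbf{h}-\tfrac{1}{2}\coth r\sin\sigma+\tfrac{d}{d\xi}\ln\mathcal{V}$ and absorbing the extra $\tfrac{1}{2}\coth r\sin\sigma$ term into the $(4n-5)\coth r$ coefficient (yielding $(4n-4)\coth r=2(2n-2)\coth r$) produces exactly the third equation of (\ref{elliptic-ode}).

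No genuine obstacle appears: the proposition is a straightforward unpacking of Lemma~\ref{reduction-theorem}. The only place requiring care is bookkeeping of the various factors of $2$ coming from the conformal factor $4$ in the metric (which sends $\coth r\mapsto\tfrac{1}{2}\coth r$ in the geodesic curvature contribution and halves the $\xi$-components), and the doubling identities hidden in $\partial_\theta\ln\sin 2\theta=2\cot 2\theta$ and $\partial_r\ln\sinh 2r=2\coth 2r$.
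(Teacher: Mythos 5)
Your proposal is correct and is precisely the computation the paper leaves implicit when it says ``Finally, Lemma~\ref{reduction-theorem} gives'': you choose the natural orthonormal frame adapted to the orbital metric, derive $\kappa_g=\dot\sigma+\tfrac12\coth r\sin\sigma$ from the connection form, take the normal derivative of $\ln\mathcal V$, and substitute into $\mathbf h=\kappa_g-\tfrac{d}{d\xi}\ln\mathcal V$, with all factors of $2$ coming out right. No gap.
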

\begin{remark}\label{elliptic-remark}
\begin{enumerate}[(i)]
\item\label{elliptic-ode-boundary} System~(\ref{elliptic-ode}) is singular on the boundary $\{(r,\theta):\mbox{$\theta=0$ or $\theta=\frac{\pi}{2}$}\}$. However, there exist solutions of (\ref{elliptic-ode}) emanating orthogonally from points in there (see Lemma~\ref{boundary}).
\item\label{elliptic-explicit-solutions}(\emph{Explicit solutions}) The curve $\theta\equiv\arctan\sqrt{\frac{4n-4m-1}{4m-1}}$ is a solution of (\ref{elliptic-ode}) with $\mathbf{h}\equiv0$. The generated hypersurface is a cone over $\mathbf{S}^{4n-4m-1}\times\mathbf{S}^{4m-1}$, therefore a singular hypersurface. Also, we have that for $a>0$ the curves $r\equiv a$ are solutions of system (\ref{elliptic-ode}) with $\mathbf{h}\equiv\pm((2n-2)\coth a+3\coth2a)$ and they generate metric spheres centered at the base-point.
\end{enumerate}
\end{remark}
\subsection{The loxodromic case}\label{loxodromic}
We next consider the following subgroup of $Sp(n,1)$ for each $m=2,\dots,n-1$:
\[H=Sp(n-m)\times Sp(m-1,1)=\left\{\begin{bmatrix}A&0&0\\0&1&0\\0&0&B\end{bmatrix}\in Sp(n,1):\begin{matrix}A\in Sp(n-m)\\B\in Sp(m-1,1)\end{matrix}\right\}.\]
Using the disk model, it is not difficult to see that
\[\Sigma=\{x\in\mathbf{D}^n:\mbox{$x_{n-m}$, $x_{n-m+1}\in\mathbf{R}$ and $x_l=0$ if $l\neq n-m$, $n-m+1$}\}\simeq\rhs^2\]
is a section for $(H,\qhs^n)$ and the orbit space is isometric to
\[\Delta=\{(u,v)\in\mathbf{R}^2:\mbox{$u^2+v^2\leq 1$ and $u$, $v\geq 0$}\},\]
where the canonical projection $\mathbf{D}^n\to\Delta$ is given by
\[x\mapsto\left(\frac{|x_{n-m+1}|}{\sqrt{1-\sum\limits_{l=n-m+2}^{n}|x_l|^2}},
\frac{\sqrt{\sum\limits_{l=1}^{n-m}|x_l|^2}}{\sqrt{1-\sum\limits_{l=n-m+2}^{n}|x_l|^2}}\right).\]
The orbital metric on $\Delta$ is
\[ds^2=\frac{4}{1-u^2-v^2}\{(1-v^2)du^2+2uvdudv+(1-u^2)dv^2\},\]
that is, the (real) hyperbolic metric of constant curvature $-\frac{1}{4}$. Also, from Lemma~\ref{volume}, the volume functional at $(u,v)\in\Delta$ is
\[\mathcal{V}=\frac{u^3v^{4n-4m-1}}{(1-u^2-v^2)^{\frac{4n+1}{2}}}.\]
Passing to polar coordinates $u=\tanh r\cos\theta$ and $v=\tanh r\sin\theta$, we can write
\begin{align*}
ds^2 &= 4(dr^2+\sinh^2rd\theta^2)\\
\mathcal{V} &= (\sinh r)^{4n-8m+3}(\sinh2r)^{4m-1}(\sin\theta)^{4n-4m-4}(\sin2\theta)^3,
\end{align*}
for $r\geq 0$ and $0\leq\theta\leq\frac{\pi}{2}$. Finally, Lemma~\ref{reduction-theorem} gives
\begin{proposition}\label{loxodromic-case}
Let $\gamma(s)=(r(s),\theta(s))$ be a curve in $\Delta$ parametrized by arc length, and let $\sigma(s)$ be the angle between $\frac{\partial}{\partial r}$ and the tangent unit vector $\frac{d\gamma}{ds}$. Then the hypersurface $\Gamma$ with $H\backslash\Gamma=\gamma$ has mean curvature $\mathbf{h}$ if and only if
\begin{equation}\label{loxodromic-ode}
\begin{aligned}
\frac{dr}{ds} &= \frac{1}{2}\cos\sigma\\
\frac{d\theta}{ds} &= \frac{1}{2}\frac{\sin\sigma}{\sinh r}\\
\frac{d\sigma}{ds} &= \left((2n-2m-2)\cot\theta+3\cot 2\theta\right)\frac{\cos\sigma}{\sinh r}\\
                   &-((2n-4m+2)\coth r+(4m-1)\coth 2r)\sin\sigma +\mathbf{h}.
\end{aligned}
\end{equation}
\end{proposition}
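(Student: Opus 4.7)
The plan is to mirror the proof of Proposition~\ref{elliptic-case}, applying the reduction formula of Lemma~\ref{reduction-theorem} to the orbital data assembled in the paragraph immediately above the statement: the orbital metric is again the hyperbolic metric of constant curvature $-\tfrac14$ in polar coordinates, so only the new volume functional $\mathcal{V}$ contributes anything new.

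The first two equations of~(\ref{loxodromic-ode}) come for free from the arc length parametrization: writing the unit tangent in the orthonormal frame $e_r = \tfrac12\partial_r$, $e_\theta = \tfrac{1}{2\sinh r}\partial_\theta$ as $\gamma'(s) = \cos\sigma\,e_r + \sin\sigma\,e_\theta$ immediately yields $r'=\tfrac12\cos\sigma$ and $\theta'=\tfrac{\sin\sigma}{2\sinh r}$. For the geodesic curvature, I would note that the only non-vanishing Christoffel coefficients in this frame are $\nabla_{e_\theta}e_r = \tfrac12\coth r\,e_\theta$ and $\nabla_{e_\theta}e_\theta = -\tfrac12\coth r\,e_r$; a direct computation of $\nabla_T T$ paired against the positive unit normal $\xi = -\sin\sigma\,e_r + \cos\sigma\,e_\theta$ then gives
\[\kappa_g = \langle \nabla_T T,\xi\rangle = \frac{d\sigma}{ds} + \tfrac12\coth r\sin\sigma.\]

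Next I would compute the normal derivative of $\ln\mathcal{V}$. From the factored form of $\mathcal{V}$ displayed in the paragraph preceding the statement one has
\[\partial_r\ln\mathcal{V} = (4n-8m+3)\coth r + 2(4m-1)\coth 2r,\]
\[\partial_\theta\ln\mathcal{V} = (4n-4m-4)\cot\theta + 6\cot 2\theta,\]
and contracting with $\xi$ yields
\[\frac{d}{d\xi}\ln\mathcal{V} = -\left[(2n-4m+\tfrac32)\coth r + (4m-1)\coth 2r\right]\sin\sigma + \frac{(2n-2m-2)\cot\theta + 3\cot 2\theta}{\sinh r}\cos\sigma.\]

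Substituting these two expressions into $\mathbf{h} = \kappa_g - \tfrac{d}{d\xi}\ln\mathcal{V}$ from~(\ref{reduced-ode}) produces the third equation of~(\ref{loxodromic-ode}); in particular, the $\tfrac12\coth r\sin\sigma$ term from $\kappa_g$ combines with the $(2n-4m+\tfrac32)\coth r\sin\sigma$ term from $\tfrac{d}{d\xi}\ln\mathcal{V}$ to yield the clean coefficient $(2n-4m+2)\coth r$ appearing in the statement. I do not expect any real obstacle: the argument is a bookkeeping variant of the one behind Proposition~\ref{elliptic-case}, and the only delicate points are keeping the sign conventions for $\xi$ and $\mathbf{h}$ consistent with those used there, and tracking the contribution of the $\sinh 2r$ and $\sin 2\theta$ factors correctly through the logarithmic derivatives.
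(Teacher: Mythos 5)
Your proposal is correct and follows exactly the route the paper intends: the text gives the orbital metric and volume functional in polar coordinates and then simply invokes Lemma~\ref{reduction-theorem}, and you have carried out the indicated bookkeeping. The Liouville-type formula $\kappa_g=\frac{d\sigma}{ds}+\tfrac12\coth r\,\sin\sigma$, the logarithmic derivatives of $\mathcal{V}$, and the contraction with $\xi=-\sin\sigma\,e_r+\cos\sigma\,e_\theta$ all check out, and the $\tfrac12\coth r$ and $(2n-4m+\tfrac32)\coth r$ terms do combine to give $(2n-4m+2)\coth r$ as in~(\ref{loxodromic-ode}).
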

\begin{remark}\label{loxodromic-remark}
\begin{enumerate}[(i)]
\item\label{loxodromic-ode-boundary} System~(\ref{loxodromic-ode}) is singular on the boundary $\{(r,\theta):\mbox{$\theta=0$ or $\theta=\frac{\pi}{2}$}\}$. However, there exist solutions of (\ref{loxodromic-ode}) emanating orthogonally from points in there (see Lemma~\ref{boundary}).
\item\label{loxodromic-explicit-solutions} (\emph{Explicit solutions})
The curve $\theta\equiv\arctan\sqrt{\frac{4n-4m-1}{3}}$ is a solution of (\ref{loxodromic-ode}) with $\mathbf{h}\equiv0$. The generated hypersurface is the product of $\mathbf{R}^{4m-4}$ with a cone over $\mathbf{S}^{4n-4m-1}\times\mathbf{S}^3$, therefore a singular hypersurface. Also, for $a>0$ the curves $r\equiv a$ are solutions of (\ref{loxodromic-ode}) with $\mathbf{h}\equiv\pm((2n-4m+2)\coth a+(4m-1)\coth2a)$ and they generate tubes of constant radius around a $\mathbf{Q}$-subspace of real dimension $4m-4$ through the base-point.
\end{enumerate}
\end{remark}
\subsection{The special loxodromic case}\label{special-loxodromic}
In $\S$\ref{sec-bisectors} we described the stabilizer of a bisector. Now, consider its  subgroup:
\[H=N\cdot\mathbf{R}_+\cdot Sp(n-1).\]
Using the disk model, is not hard to see that
\[\Sigma=\{x\in\mathbf{D}^n:\mbox{$x_n$, $x_{n-1}\in k\mathbf{R}$ and $x_l=0$ if $l\neq n$, $n-1$}\}\simeq\rhs^2\]
is a section for $(H,\qhs^n)$ and the orbit space is isometric to
\[\Delta=\{(u,v)\in\mathbf{R}^2:\mbox{$u^2+v^2\leq 1$ and $v\geq 0$}\}.\]
The canonical projection $x\mapsto(u,v)$ is given by
\begin{equation*}\label{canonical-projection-bisector}
u =\frac{2x_{n,3}}{1-|x_n|^2+\sqrt{|1-x_n|^2|1+x_n|^2-4x_{n,1}^2-4x_{n,2}^2}}
\end{equation*}
and
\[v =\frac{\sqrt{2}|x'|}{\sqrt{1-|x_n|^2+\sqrt{|1-x_n|^2|1+x_n|^2-4x_{n,1}^2-4x_{n,2}^2}}},\]
where $x_n=x_{n,0}+ix_{n,1}+jx_{n,2}+kx_{n,3}$, with $x_{n,0}$, $x_{n,1}$, $x_{n,2}$, $x_{n,3}\in\mathbf{R}$.
The orbital metric on $\Delta$ is
\[ds^2=\frac{4}{1-u^2-v^2}\{(1-v^2)du^2+2uvdudv+(1-u^2)dv^2\},\]
that is, the (real) hyperbolic metric of constant curvature $-\frac{1}{4}$. Also, from Lemma~\ref{volume}, the volume functional at $(u,v)\in\Delta$ is
\[\mathcal{V}=\frac{(1+u^2)^3v^{4n-5}}{(1-u^2-v^2)^{\frac{4n+1}{2}}}.\]
Passing to polar coordinates $u=\tanh r\cos\theta$ and $v=\tanh r\sin\theta$, we can write
\begin{align*}
ds^2 &= 4(dr^2+\sinh^2rd\theta^2)\\
\mathcal{V} &= (\cosh^2r+\sinh^2r\cos^2\theta)(\sinh r)^{4n-5}(\sin\theta)^{4n-5},
\end{align*}
for $r\geq 0$ and $0\leq\theta\leq\pi$. Finally, Lemma~\ref{reduction-theorem} gives
\begin{proposition}\label{special-loxodromic-case}
Let $\gamma(s)=(r(s),\theta(s))$ be a curve in $\Delta$ parametrized by arc length, and let $\sigma(s)$ be the angle between $\frac{\partial}{\partial r}$ and the tangent unit vector $\frac{d\gamma}{ds}$. Then the hypersurface $\Gamma$ with $H\backslash\Gamma=\gamma$ has mean curvature $\mathbf{h}$ if and only if
\begin{equation}\label{special-loxodromic-ode}
\begin{aligned}
\frac{dr}{ds} &= \frac{1}{2}\cos\sigma\\
\frac{d\theta}{ds} &= \frac{1}{2}\frac{\sin\sigma}{\sinh r}\\
\frac{d\sigma}{ds} &= \frac{1}{2}\left((4n-5)\cot\theta-3\frac{\sinh^2r\sin 2\theta}{\cosh^2r+\sinh^2r\cos^2\theta}\right)\frac{\cos\sigma}{\sinh r}\\
&-\frac{1}{2}\left((4n-4)\coth r + 3\frac{\sinh 2r(1+\cos^2\theta)}{\cosh^2r+\sinh^2r\cos^2\theta}\right)\sin\sigma + \mathbf{h}.
\end{aligned}
\end{equation}
\end{proposition}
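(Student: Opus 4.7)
The plan is to follow the template already established in Propositions~\ref{elliptic-case} and \ref{loxodromic-case}: first verify that $\Sigma$ is a section and compute the induced orbital metric on $\Sigma$; then compute the canonical projection $\mathbf{D}^n\to\Delta$; then compute the volume functional via Lemma~\ref{volume}; and finally change to polar coordinates and invoke Lemma~\ref{reduction-theorem}.

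First I would check that $\Sigma$ is a section for $(H,\qhs^n)$. Since $\Sigma$ sits in the real span of the vectors $kE_{n-1}$ and $kE_n$ inside $\mathbf{D}^n$, it is a $\rhs^2$ of constant curvature $-\frac{1}{4}$; this gives the stated form of $ds^2$ by the standard formula for the restriction of the disc-model metric to a totally geodesic real $2$-plane. A dimension count ($\dim N=2$, $\dim\mathbf{R}_+=1$ and $\dim Sp(n-1)=(n-1)(2n-1)$) shows that a principal orbit has codimension two, matching $\dim\Sigma=2$. To obtain the canonical projection, I would start with a general $x\in\mathbf{D}^n$ and transport it back to $\Sigma$ along its $H$-orbit: use $Sp(n-1)$ acting on the first $n-1$ coordinates to rotate $x'$ into the $x_{n-1}$-axis and use the $T^1\subset Sp(1)$-action described in $\S$\ref{iwasawa} to push $x_{n-1}$ into $k\mathbf{R}$; then use a Heisenberg translation in $N$ together with the transvection $\psi_t$ to normalize $x_n$ to $k\mathbf{R}$. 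Tracking the resulting invariants yields the stated formulas for $u$ and $v$ (whose denominators come from clearing the scaling by $\psi_t$ after reading the norm $|x'|^2$ and the $k$-part of $x_n$).

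The main obstacle will be the computation of the volume functional $\mathcal{V}$. The factor $v^{4n-5}$ is expected: it arises from the $Sp(n-1)$-orbit of a point with $|x'|$ proportional to $v$, whose volume grows like $(\sinh r\sin\theta)^{4n-5}$ in the hyperbolic-disc rescaling. The novel factor $(1+u^2)^3$ must come from the $N\cdot\mathbf{R}_+$ block, since the Killing fields generated by $N=\{h(0,\nu):\Re(k\nu)=0\}$ (a $2$-parameter subgroup) and by the transvection generator do not stay orthogonal in $\qhs^n$ as the base-point moves along $\Sigma$: their Gram determinant, read off from the horospherical-coordinate action formulas in $\S$\ref{iwasawa}, picks up precisely $(1+u^2)^3$ after substitution of the projection formulas. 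Concretely, I would pick an orthogonal basis $\{X_1,\dots,X_m\}$ of $\mathfrak{p}=\mathfrak{h}\ominus\mathfrak{k}_{(u,v)}$, compute $|X_1^*\wedge\cdots\wedge X_m^*|$ at the base-point using the Iwasawa-coordinate action of $H$, and divide by the volume element of $H/K$; the singular factor $(1-u^2-v^2)^{-(4n+1)/2}$ is automatic from the conformal blow-up of the disc metric.

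With $\mathcal{V}$ in hand, passing to polar coordinates $u=\tanh r\cos\theta$, $v=\tanh r\sin\theta$ turns the orbital metric into $4(dr^2+\sinh^2 r\,d\theta^2)$, and substituting $1-u^2-v^2=1/\cosh^2 r$ and $1+u^2=\cosh^{-2}r(\cosh^2 r+\sinh^2 r\cos^2\theta)$ rewrites $\mathcal{V}$ exactly as stated (after absorbing the $\cosh r$-powers into $\sinh r$-powers via $\sinh 2r=2\sinh r\cosh r$). Finally, in the orthogonal frame $\{\tfrac{1}{2}\partial_r,\tfrac{1}{2\sinh r}\partial_\theta\}$, the first two lines of (\ref{special-loxodromic-ode}) are the definition of the angle $\sigma$ between $\partial_r$ and $\gamma'$, while the third line is Lemma~\ref{reduction-theorem} with $\kappa_g=\frac{d\sigma}{ds}+(\text{geodesic-curvature correction for polar coordinates})$ and $\frac{d}{d\xi}\ln\mathcal V$ computed from the explicit $\mathcal V$; differentiating $\ln\mathcal V$ with respect to $r$ and $\theta$ and pairing with $\xi=(-\sin\sigma,\cos\sigma/\sinh r)$ produces the two bracketed expressions in (\ref{special-loxodromic-ode}), finishing the proof.
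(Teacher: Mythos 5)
Your proposal is correct and follows essentially the same route as the paper: compute the section, canonical projection, and orbital metric, read off the volume functional $\mathcal{V}$ via Lemma~\ref{volume}, pass to polar coordinates, and conclude via Lemma~\ref{reduction-theorem} using the Liouville-type formula $\kappa_g=\frac{d\sigma}{ds}+\frac{\coth r}{2}\sin\sigma$ and $\frac{d}{d\xi}\ln\mathcal{V}$ in the orthonormal frame $\{\frac{1}{2}\partial_r,\frac{1}{2\sinh r}\partial_\theta\}$. Note that your polar-coordinate substitution correctly yields $\mathcal{V}=(\cosh^2 r+\sinh^2 r\cos^2\theta)^3(\sinh r)^{4n-5}(\sin\theta)^{4n-5}$, which is what the coefficient $3$ in the stated ODE requires; the paper's displayed polar form of $\mathcal{V}$ omits the cube, an apparent typo.
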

\begin{remark}\label{special-loxodromic-remark}
\begin{enumerate}[(i)]
\item\label{special-loxodromic-ode-boundary} System~(\ref{special-loxodromic-ode}) is singular on the boundary $\{(r,\theta):\mbox{$\theta=0$ or $\theta=\pi$}\}$. However, there exist solutions of (\ref{special-loxodromic-ode}) emanating orthogonally from points in there (see Lemma~\ref{boundary}).
\item\label{special-loxodromic-explicit-solutions} (\emph{Explicit solutions}) The orbital metric and the volume functional are invariant by the reflection on the line $\theta\equiv\frac{\pi}{2}$, hence this line is a solution of (\ref{special-loxodromic-ode}) with $\mathbf{h}\equiv0$. The generated hypersurface is the bisector in Example~\ref{canonical-bisector}.
\end{enumerate}
\end{remark}
\subsection{The parabolic case}\label{parabolic}
For each $m=1,\dots,n-1$ we identify the groups $\mathcal{H}^{4m-1}$ and $Sp(n-m)$ with the following subgroups of $Sp(n,1)$:
\[\left\{h(\xi,\nu)\in\mathcal{H}^{4n-1}:\mbox{$\xi=\begin{bmatrix}0\\\eta\end{bmatrix}$, with $\eta\in\mathbf{Q}^{m-1}$}\right\}\]
and
\[\left\{\begin{bmatrix}B&0\\0&I_{m+1}\end{bmatrix}:B\in Sp(n-m)\right\},\]
respectively. Let $H=\mathcal{H}^{4m-1}\times Sp(n-m)$. Using horospherical coordinates we see that
\[\Sigma=\{(\omega,\alpha,\beta):\mbox{$\beta=0$, $\omega_{n-m}\in\mathbf{R}$ and $\omega_l=0$ if $l\neq n-m$}\}\simeq\rhs^2\]
is a section for $(H,\qhs^n)$ and the orbit space is isometric to the quadrant
\[\Delta=\{(\alpha,\rho)\in\mathbf{R}^2:\mbox{$\alpha>0$ and $\rho\geq 0$}\},\]
where the canonical projection $\qhs^n\to\Delta$ is given by
\[(\omega,\alpha,\beta)\mapsto\left(\alpha,\sqrt{\sum_{l=1}^{n-m}|\omega_l|^2}\right).\]
The orbital metric on $\Delta$ is
\[ds^2=\frac{1}{\alpha^2}(d\alpha^2 + 4\alpha d\rho^2)\]
that is, the (real) hyperbolic metric of constant curvature $-\frac{1}{4}$. Also, from Lemma~\ref{volume}, the volume functional $(\alpha,\rho)\in\Delta$ is
\[\mathcal{V}(\alpha,\rho) = \alpha^{-\frac{4n+1}{2}}\rho^{4n-4m-1}.\]
Finally, Lemma~\ref{reduction-theorem} gives
\begin{proposition}\label{parabolic-case}
Let $\gamma(s)=(\alpha(s),\rho(s))$ be a curve in $\Delta$ parametrized by arc length, and let $\sigma(s)$ be the angle between $\frac{\partial}{\partial\alpha}$ and the tangent unit vector $\frac{d\gamma}{ds}$. Then the hypersurface $\Gamma$ with $H\backslash\Gamma=\gamma$ has mean curvature $\mathbf{h}$ if and only if
\begin{equation}\label{parabolic-ode}
\begin{aligned}
\frac{d\alpha}{ds} &= \alpha\cos\sigma\\
\frac{d\rho}{ds} &= \frac{1}{2}\sqrt{\alpha}\sin\sigma\\
\frac{d\sigma}{ds} &= \left(2n-2m-\frac{1}{2}\right)\frac{\sqrt{\alpha}}{\rho}\cos\sigma+(2n+1)\sin\sigma + \mathbf{h}.
\end{aligned}
\end{equation}
\end{proposition}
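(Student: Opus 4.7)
The plan is to apply Lemma~\ref{reduction-theorem} directly, following the same template used in Propositions~\ref{elliptic-case}, \ref{loxodromic-case}, and \ref{special-loxodromic-case}. All the structural ingredients have already been assembled: the orbital metric and the volume functional on $\Delta$ are explicitly given, so what remains is essentially a coordinate calculation. The only minor novelty is that the orbit space here is presented in half-plane coordinates $(\alpha,\rho)$ rather than in polar coordinates.

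First I would introduce the orthonormal frame $e_1 = \alpha\,\partial/\partial\alpha$ and $e_2 = \tfrac{1}{2}\sqrt{\alpha}\,\partial/\partial\rho$ associated to the orbital metric $ds^2 = \alpha^{-2}(d\alpha^2 + 4\alpha\,d\rho^2)$. Writing the unit tangent of $\gamma$ as $T = \cos\sigma\,e_1 + \sin\sigma\,e_2$ and the positive unit normal as $\xi = -\sin\sigma\,e_1 + \cos\sigma\,e_2$, the first two equations of (\ref{parabolic-ode}) are simply the components of $T$ in the coordinate basis $\{\partial/\partial\alpha,\partial/\partial\rho\}$.

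Next, since $\ln\mathcal{V} = -\tfrac{4n+1}{2}\ln\alpha + (4n-4m-1)\ln\rho$, a one-line computation gives
\[
\xi(\ln\mathcal{V}) = \tfrac{4n+1}{2}\sin\sigma + \left(2n-2m-\tfrac{1}{2}\right)\frac{\sqrt{\alpha}}{\rho}\cos\sigma.
\]
For the geodesic curvature, I would extract the nonzero Christoffel symbols ($\Gamma^{\alpha}_{\alpha\alpha}=-1/\alpha$, $\Gamma^{\alpha}_{\rho\rho}=2$, $\Gamma^{\rho}_{\alpha\rho}=-1/(2\alpha)$), obtain $\nabla e_1 = -\alpha^{-1/2}\,d\rho\otimes e_2$, and hence identify the connection $1$-form $\omega$ defined by $\nabla e_1 = \omega\otimes e_2$ as $\omega = -\alpha^{-1/2}\,d\rho$. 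Evaluating on the tangent gives $\omega(T) = -\tfrac{1}{2}\sin\sigma$, and the standard identity $\kappa_g = d\sigma/ds + \omega(T)$ yields $\kappa_g = d\sigma/ds - \tfrac{1}{2}\sin\sigma$.

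Finally, substituting into $\mathbf{h} = \kappa_g - \xi(\ln\mathcal{V})$ and solving for $d\sigma/ds$, the combination $\tfrac{4n+1}{2} + \tfrac{1}{2}$ consolidates to $2n+1$, reproducing exactly the third equation of (\ref{parabolic-ode}). The main obstacle, as in the earlier propositions, is simply keeping the signs and numerical constants straight; all the conceptual content has been established in the preceding discussion of the polar action and its orbital invariants.
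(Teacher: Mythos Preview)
Your proposal is correct and follows exactly the approach of the paper: the paper simply writes ``Finally, Lemma~\ref{reduction-theorem} gives'' and states the system, so all that is intended is precisely the computation you carry out---reading off the first two equations from the orthonormal frame $\{e_1,e_2\}$, computing $\xi(\ln\mathcal{V})$ from the explicit volume functional, obtaining $\kappa_g=\frac{d\sigma}{ds}-\tfrac{1}{2}\sin\sigma$ from the connection form, and combining via~(\ref{reduced-ode}). Your intermediate values (Christoffel symbols, $\omega(T)=-\tfrac12\sin\sigma$, and the consolidation $\tfrac{4n+1}{2}+\tfrac12=2n+1$) all check out.
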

\begin{remark}\label{parabolic-remark}
\begin{enumerate}[(i)]
\item\label{parabolic-ode-boundary} System~(\ref{parabolic-ode}) is singular on the boundary $\{(\alpha,\rho):\mbox{$\alpha>0$ and $\rho=0$}\}$. However, there exist solutions of (\ref{parabolic-ode}) emanating orthogonally from points in there (see Lemma~\ref{boundary}).
\item\label{parabolic-explicit-solutions} (\emph{Explicit solutions}) Note that for $a>0$ the line $\alpha\equiv a$ is a solution of (\ref{parabolic-ode}) with $\mathbf{h}\equiv\pm(2n+1)$ which generates a horosphere. Also, since each $\psi_t\in\mathbf{R}_+$ normalizes $H$, it induces the transformation
\[(\alpha,\rho,\sigma)\mapsto(\mathrm{e}^{2t}\alpha,\mathrm{e}^t\rho,\sigma)\]
leaving (\ref{parabolic-ode}) invariant.
\end{enumerate}
\end{remark}
\subsection{The special parabolic case}\label{special-parabolic}
Let $H=\{h(\xi,\nu)\in\mathcal{H}^{4n-1}:\Re(\xi_{n-1})=0\}$. We have that $H$ acts freely on $\qhs^n$. Using horospherical coordinates we see that
\[\Sigma=\{(\omega,\alpha,\beta):\mbox{$\beta=0$, $\omega_{n-1}\in\mathbf{R}$ and $\omega_l=0$ if $l\neq n-1$}\}\simeq\rhs^2\]
is a section for $(H,\qhs^n)$ and the orbit space is isometric to the half-plane
\[\Delta=\{(\alpha,\rho)\in\mathbf{R}^2:\alpha>0\},\]
where the canonical projection $\qhs^n\to\Delta$ is given by
\[(\omega,\alpha,\beta)\mapsto(\alpha,\Re(\omega_{n-1})).\]
The orbital metric on $\Delta$ is
\[ds^2=\frac{1}{\alpha^2}(d\alpha^2 + 4\alpha d\rho^2)\]
that is, the (real) hyperbolic metric of constant curvature $-\frac{1}{4}$. Also, from Lemma~\ref{volume}, the volume functional $(\alpha,\rho)\in\Delta$ is
\[\mathcal{V}(\alpha,\rho) = \alpha^{-\frac{4n+1}{2}}.\]
Finally, Lemma~\ref{reduction-theorem} gives
\begin{proposition}\label{special-parabolic-case}
Let $\gamma(s)=(\alpha(s),\rho(s))$ be a curve in $\Delta$ parametrized by arc length, and let $\sigma(s)$ be the angle between $\frac{\partial}{\partial\alpha}$ and the tangent unit vector $\frac{d\gamma}{ds}$. Then the hypersurface $\Gamma$ with $H\backslash\Gamma=\gamma$ has mean curvature $\mathbf{h}$ if and only if
\begin{equation}\label{special-parabolic-ode}
\begin{aligned}
\frac{d\alpha}{ds} &= \alpha\cos\sigma\\
\frac{d\rho}{ds} &= \frac{1}{2}\sqrt{\alpha}\sin\sigma\\
\frac{d\sigma}{ds} &= (2n+1)\sin\sigma + \mathbf{h}.
\end{aligned}
\end{equation}
\end{proposition}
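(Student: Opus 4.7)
The plan is to apply Lemma~\ref{reduction-theorem} to the orbital data---the orbital metric $\frac{1}{\alpha^2}(d\alpha^2 + 4\alpha\, d\rho^2)$ and the volume functional $\mathcal{V} = \alpha^{-(4n+1)/2}$---already computed in this subsection. The key simplification relative to $\S$\ref{elliptic}--$\S$\ref{special-loxodromic} is that both quantities are independent of $\rho$, which is what makes the resulting ODE so much shorter than the earlier ones.

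First I would introduce the orthonormal coframe $\theta^1 = \alpha^{-1}d\alpha$, $\theta^2 = 2\alpha^{-1/2}d\rho$ with dual frame $e_1 = \alpha\,\partial_\alpha$, $e_2 = \tfrac{\sqrt\alpha}{2}\partial_\rho$. Decomposing the unit tangent as $T = \cos\sigma\,e_1 + \sin\sigma\,e_2$ yields the first two equations of (\ref{special-parabolic-ode}) immediately from the definition of $\sigma$.

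For the third equation, only the geodesic curvature and the normal logarithmic derivative of $\mathcal{V}$ need to be computed. Using $d\theta^1 = 0$ and $d\theta^2 = -\alpha^{-3/2}\,d\alpha\wedge d\rho$, Cartan's structure equations pin down the Levi-Civita connection $1$-form as $\omega^1_2 = -\alpha^{-1/2}d\rho$; a short unwinding against the oriented positive unit normal $\xi := -\sin\sigma\,e_1 + \cos\sigma\,e_2$ then gives $\kappa_g = \frac{d\sigma}{ds} - \tfrac{1}{2}\sin\sigma$. On the $\mathcal{V}$ side, $\ln\mathcal{V} = -\tfrac{4n+1}{2}\ln\alpha$ yields $\nabla\ln\mathcal{V} = -\tfrac{4n+1}{2}e_1$ and hence $\frac{d}{d\xi}\ln\mathcal{V} = \tfrac{(4n+1)\sin\sigma}{2}$. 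Substituting into $\mathbf{h} = \kappa_g - \frac{d}{d\xi}\ln\mathcal{V}$ collapses to $\mathbf{h} = \frac{d\sigma}{ds} - (2n+1)\sin\sigma$, which rearranges to the third equation of (\ref{special-parabolic-ode}).

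The only delicate point is the orientation-dependent sign of $\kappa_g$, since flipping $\xi$ flips both $\kappa_g$ and $\frac{d}{d\xi}\ln\mathcal{V}$. I would pin it down by checking the horosphere $\alpha\equiv a$ (which is $H$-equivariant and corresponds to $\sigma\equiv\pi/2$): the formula then predicts $\mathbf{h} = -(2n+1)$, matching the known trace of the shape operator of a horosphere in $\qhs^n$ and the value predicted by the analogous parabolic case in Remark~\ref{parabolic-remark}~(\ref{parabolic-explicit-solutions}).
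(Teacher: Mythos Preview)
Your proposal is correct and follows exactly the approach the paper takes: the paper's proof is simply the phrase ``Finally, Lemma~\ref{reduction-theorem} gives'' applied to the orbital metric $\alpha^{-2}(d\alpha^2+4\alpha\,d\rho^2)$ and volume functional $\mathcal{V}=\alpha^{-(4n+1)/2}$, with the computational details suppressed. You have supplied those details (orthonormal frame, connection form, $\kappa_g$, and $\tfrac{d}{d\xi}\ln\mathcal{V}$) and the horosphere sanity check, all of which are sound; there is nothing to add.
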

\begin{remark}\label{special-parabolic-remark}
\begin{enumerate}[(i)]
\item\label{special-parabolic-ode-boundary}
Note that all the orbits are principal, so the boundary of the orbit space is empty. Thus system (\ref{special-parabolic-ode}) has no singular points.  
\item\label{special-parabolic-explicit-solutions} (\emph{Explicit solutions}) Note that (\ref{special-parabolic-ode}) is invariant under $(\alpha,\rho,\sigma)\mapsto(\mathrm{e}^{2t}\alpha,\mathrm{e}^t\rho,\sigma)$, $t\in\mathbf{R}$ (induced by transvections as in Remark~\ref{parabolic-remark}~(\ref{parabolic-explicit-solutions})) and it is invariant under $\rho$-translations (induced by $\mathcal{H}^{4n-1}/H$). Also, for $R\in\mathbf{R}$, it is invariant under reflections on lines $\rho\equiv R$ (here $\mathbf{h}$ is taken to $-\mathbf{h}$). In particular, the lines $\rho\equiv R$ are solutions of \ref{special-parabolic-ode} with $\mathbf{h}\equiv0$ and they generate fans.
\end{enumerate}
\end{remark}
\section{Proof of Theorem~\ref{theorem-one}}
We write systems (\ref{elliptic-ode}) and (\ref{loxodromic-ode}) in an unified way and study their solutions for $\mathbf{h}\equiv0$. Of course, the volume functional is
\[\mathcal{V}=(\sinh r)^A(\sinh2r)^B(\sin\theta)^C(\sin2\theta)^D,\]
where $A$, $B$, $C$ and $D$ are positive integers depending on the specific transformation group. Thus for $\mathbf{h}\equiv0$ we have that (\ref{elliptic-ode}) and (\ref{loxodromic-ode}) are of the form
\begin{equation}\label{elliptic-loxodromic-ode}
\begin{aligned}
\frac{dr}{ds}=&\frac{1}{2}\cos\sigma\\
\frac{d\theta}{ds}=&\frac{1}{2}\frac{\sin\sigma}{\sinh r}\\
\frac{d\sigma}{ds}=&P(\theta)\frac{\cos\sigma}{\sinh r}-Q(r)\sin\sigma,
\end{aligned}
\end{equation}
where
\[P=\frac{1}{2}\frac{\partial}{\partial\theta}\ln\mathcal{V}=\frac{C}{2}\cot\theta+D\cot2\theta\]
and
\[Q=\frac{1}{2}\left(\frac{\partial}{\partial r}\ln\mathcal{V}+\coth r\right)=\frac{(A+1)}{2}\coth r +B\coth2r.\]
For each $a>0$ let $c_a(s)=(r_a(s),\theta_a(s),\sigma_a(s))$ be the solution  of (\ref{elliptic-loxodromic-ode}) with initial conditions $c_a(0)=(a,0,\frac{\pi}{2})$. We consider the one-parameter family of curves $\gamma_a(s)=(r_a(s),\theta_a(s))$, $a>0$. Next we will fix $a>0$ and we will study the global behavior of $\gamma_a$.

Multiplying the third equation in (\ref{elliptic-loxodromic-ode}) by $\sin2\theta$ and differentiating at $s=0$ we get that
\begin{equation}\label{e-l-sigma}
\frac{d\sigma_a}{ds}(0)=\frac{-Q(a)}{C+D+1}<0,
\end{equation}
since $Q>0$. On the other hand, from the third equation in (\ref{elliptic-loxodromic-ode}) it follows that
\[\frac{d\sigma}{ds}=
\begin{cases}
-Q(r)<0, &\text{ if }\sigma=\frac{\pi}{2}\\
Q(r)>0, &\text{ if }\sigma=-\frac{\pi}{2}
\end{cases}.\]
This, combined with (\ref{e-l-sigma}) implies that $\sigma_a(s)\in(-\frac{\pi}{2}+\delta,\frac{\pi}{2}-\delta)$ for some $\delta>0$. In particular the first equation in (\ref{elliptic-loxodromic-ode}) shows that $r_a$ is a strictly increasing function.

Now, consider the function
\[I=\mathcal{V}\cos\sigma.\]

Since $\frac{\partial}{\partial r}\mathcal{\ln V}=A\coth r+2B\coth2r$, we have $\frac{\partial}{\partial r}\mathcal{V}>(A+2B)\mathcal{V}=(4n+1)\mathcal{V}$. Thus, along a solution of (\ref{elliptic-loxodromic-ode}) 
\[\frac{dI}{ds}=\frac{1}{2}\left(\frac{\partial}{\partial r}\mathcal{V}+\mathcal{V}\coth r\sin^2\sigma\right)
>\frac{1}{2}\frac{\partial}{\partial r}\mathcal{V}>\frac{4n+1}{2}\mathcal{V}\geq\frac{4n+1}{2}I,\]
for $s>0$. Hence $\lim\limits_{s\to+\infty}I(s)=+\infty$. In particular, $\lim\limits_{s\to+\infty}r_a(s)=+\infty$ and $\theta_a(s)\in(0,\frac{\pi}{2})$, for all $s>0$.

We conclude that $c_a$ is a complete solution of (\ref{elliptic-loxodromic-ode}), hence $\gamma_a$ is defined for all $s\geq0$, and it does not have self-intersections. Therefore the generated hypersurface is a complete, embedded, minimal hypersurface in $\qhs^n$. In varying $a>0$, we get a one-parameter family of such hypersurfaces. We can also replace the chosen initial conditions by the initial conditions $c_a(0)=(a,\frac{\pi}{2},-\frac{\pi}{2})$ and repeat the argument in order to construct another one-parameter family of such hypersurfaces. This completes the proof of Theorem~\ref{theorem-one}, parts (\ref{theorem-one-i}) and (\ref{theorem-one-ii}).
\section{Proof of Theorem~\ref{theorem-two}}
We analyse the global behavior of solution curves of (\ref{special-loxodromic-ode}) for $\mathbf{h}\equiv0$. For all $a\in\mathbf{R}$ let $c_a(s)=(r_a(s),\theta_a(s),\sigma_a(s))$ be the solution with initial conditions
\[c_a(0)=
\begin{cases}
(a,0,\frac{\pi}{2}), &\text{ if }a>0\\
(0,\frac{\pi}{2},0), &\text{ if }a=0\\
(-a,\pi,-\frac{\pi}{2}), &\text{ if }a<0
\end{cases}.\]
Set $\gamma_a(s)=(r_a(s),\theta_a(s))$. We have that $\gamma_0$ is the \emph{bisector solution} $\theta\equiv\frac{\pi}{2}$ and $\gamma_{-a}$ is the mirror image of $\gamma_a$ on $\theta\equiv\frac{\pi}{2}$ (see Remark~\ref{special-loxodromic-remark}~(\ref{special-loxodromic-explicit-solutions})). Then it is sufficient consider $a>0$. Proceeding as in the proof of Theorem~\ref{theorem-one} and following the same steps, using even the same semi-first integral $I$, we show that each $\gamma_a(s)$ is defined for all $s\geq0$ without self-intersections.

In order to show that the family $(c_a)_{a\in\mathbf{R}}$ is analytic in $a$, we go back to Cartesian coordinates $(u,v)$ as in $\S$\ref{special-loxodromic} and note that near to line $v=0$ we may consider $u$ as a function of $v$, so that (\ref{special-loxodromic-ode}) becomes:
\begin{equation}\label{ode-analytic}
\begin{aligned}
v\frac{d^2u}{dv^2} = \frac{1}{(1+u^2)(1-u^2-v^2)}
&\left\{(1-v^2)\left(\frac{du}{dv}\right)^2+2uv\frac{du}{dv}+1-u^2\right\}\\\times
&\left\{12uv-((4n-5)(1+u^2)+6v^2)\frac{du}{dv}\right\}.
\end{aligned}
\end{equation}
Lemma~\ref{analytic} shows that there exists a unique analytic solution $u=u(t,v)$ of (\ref{ode-analytic}) which is a convergent power series of $(t,v)$ in a neighborhood of $(t_0,0)$ with $u(t_0,0)=t_0$, $\frac{du}{dv}(t_0,0)=0$, for any $t_0\in(-1,1)$.

The hypersurface generated by $\gamma_0$ is given by the equation $u=\Re(kx_n)=0$, hence it is the bisector $\mathcal{B}$ in Example~\ref{canonical-bisector}. Moreover, no other curve $\gamma_a$ generates a bisector. In fact, suppose that some $\gamma_a$ generate the bisector $\mathcal{B}'$. The group $H=N\cdot\mathbf{R}_+\cdot Sp(n-1)$ stabilizes $\mathcal{B}'$, so $H$ stabilizes its spine $\sigma'$. In particular $\psi_t(\sigma')=\sigma'$, for all $t\in\mathbf{R}$. So (passing to horospherical coordinates) if $p\in\sigma'$ we get that
\[\mbox{$\lim\limits_{t\to-\infty}\psi_t(p)=(0,0,0)$ and $\lim\limits_{t\to+\infty}\psi_t(p)=\infty$}.\]
Then $(0,0,0)$, $\infty\in\partial\sigma'\subset\partial\Sigma'$. Thus we see that $\mathcal{B}$ and $\mathcal{B}'$ have the same $\mathbf{Q}$-spine $\Sigma$, since two distinct points in $\partial\qhs^n$ determine a unique $\mathbf{Q}$-line. Moreover, as $\sigma'$ is totally geodesic then the unique real geodesic with endpoints $(0,0,0)$ and $\infty$ is contained in $\sigma'$. In particular  the base-point $(0,1,0)$ is in $\sigma'$. On the other hand, $N\cdot\mathbf{R}_+$ acts on $\Sigma$ by isometries stabilizing $\sigma'$ and the $N\cdot\mathbf{R}_+$\nobreakdash -orbit through $(0,1,0)$ is the spine of $\mathcal{B}$. Hence the spines of $\mathcal{B}$ and $\mathcal{B}'$ coincide. Therefore $\mathcal{B}=\mathcal{B}'$, i.e. $a=0$.

We have shown that the hypersurfaces generated by $\gamma_a$, $a\in\mathbf{R}$, define an analytic one-parameter family of embedded, complete, minimal hypersurfaces diffeomorphic to $\mathbf{R}^{4n-1}$, such that the hypersurface corresponding to $a=0$ is the unique bisector in the family. Since the isometry group of $\qhs^n$ acts transitively on the set of bisectors, this completes the proof of Theorem~\ref{theorem-two}.
\section{Proof of Theorem~\ref{theorem-three}}
\subsection{The parabolic case}
We want to analyse the global behavior of solutions of system~(\ref{parabolic-ode}) for $\mathbf{h}\equiv0$. Consider the solution $c_a(s)=(\alpha_a(s),\rho_a(s),\sigma_a(s))$ with initial conditions $c_a(0)=(a,0,\frac{\pi}{2})$, for all $a>0$. We also write $\gamma_a(s)=(\alpha_a(s),\rho_a(s))$. As before, we fix $a>0$ and study the curve $\gamma_a$.

Multiplying the third equation in (\ref{parabolic-ode}) by $\rho$ and differentiating at $s=0$ we have    that
\begin{equation}\label{p-sigma-d1}
\frac{d\sigma}{ds}(0)=\frac{2n+1}{4n-4m}>0.
\end{equation}
Next, differentiating third equation in (\ref{parabolic-ode}), we get that
\begin{equation}\label{p-sigma-d2}
\mbox{$\frac{d^2\sigma}{ds^2}=\frac{\sqrt{\alpha}}{2\rho}\left\{\left(2n-2m-\frac{1}{2}\right)\cos^2\sigma+(2n+1)\sin^2\sigma\right\}>0$, if $\frac{d\sigma}{ds}=0$}.
\end{equation}
On the other hand, from the third equation in (\ref{parabolic-ode}) we have that
\begin{equation}\label{p-sigma-d3}
\frac{d\sigma}{ds}=-\left(2n-2m-\frac{1}{2}\right)\frac{\sqrt{\alpha}}{\rho}<0
\end{equation}
for $\sigma=\pi$. Assertions (\ref{p-sigma-d1}), (\ref{p-sigma-d2}) and (\ref{p-sigma-d3}) combined imply that $\sigma_a$ is monotonically increasing and $\lim\limits_{s\to+\infty}\sigma_a(s)\in(\frac{\pi}{2},\pi]$.

In particular, $\frac{d\alpha_a}{ds}(s)<0$ and $\frac{d\rho_a}{ds}(s)>0$. The first inequality says that $\lim\limits_{s\to+\infty}\alpha_a(s)\geq0$. But the first equation in (\ref{parabolic-ode}) implies that $\lim\limits_{s\to+\infty}\alpha_a(s)=0$ since $\lim\limits_{s\to+\infty}\frac{d\alpha_a}{ds}(s)=0$. The second inequality says that $\left|\frac{\cos\sigma_a}{\rho_a}\right|$ is bounded. Since $\lim\limits_{s\to+\infty}\alpha_a(s)=\lim\limits_{s\to+\infty}\frac{d\sigma_a}{ds}(s)=0$, from the third equation in (\ref{parabolic-ode}) we must have that $\lim\limits_{s\to+\infty}\sigma_a(s)=\pi$.

So far we have shown that $\alpha_a$ monotonically decreases to $0$, $\rho_a$ is monotonically increasing, and $\sigma_a$ monotonically increases to $\pi$. Next we show that $\rho_a$ is bounded and estimate $\lim\limits_{s\to+\infty}\rho_a(s)$ to see that $(\gamma_a)_{a>0}$ fills the orbit space.

In fact, let
\[I=\alpha^{-2n-1}\rho^{\frac{(4n+2)(4n-4m-2)+1}{4n+1}}
\left\{\sqrt{\alpha}\cos\sigma+\frac{4n+1}{4n-4m-1}\rho\sin\sigma\right\}\]
and
\[J=\alpha^{\frac{-(4n-4m-1)(4n+3)-1}{8n-8m}}\rho^{4n-4m-1}
\left\{\sqrt{\alpha}\cos\sigma+\frac{4n+2}{4n-4m}\rho\sin\sigma\right\}.\]
Then along a solution of (\ref{parabolic-ode}):
\[\frac{dI}{ds}=-\left(\frac{2m+2}{4n+1}\right)\alpha^{-2n}\rho^{\frac{(4n+2)(4n-4m-2)-4n}{4n+1}}\sin\sigma\cos\sigma>0\]
and
\[\frac{dJ}{ds}=\frac{(4m+2)(4m+1)}{(4n-4m)^2}
\alpha^{\frac{-(4n-4m-1)(4n+3)-1}{8n-8m}}\rho^{4n-4m}\sin\sigma\cos\sigma<0,\]
for all $s>0$. In particular, as $I(0)=J(0)=0$, we have $J(s)<0<I(s)$ for all $s>0$. Then
\[\frac{-4n+4m}{4n+2}\frac{\sqrt{\alpha_a}}{\rho_a}<\tan\sigma_a<
\frac{-4n+4m+1}{4n+1}\frac{\sqrt{\alpha_a}}{\rho_a}.\]
This combined with the first two equations in (\ref{parabolic-ode}) gives
\[\frac{-4n+4m+1}{4n+1}\frac{d\alpha_a}{ds}<2\rho_a\frac{d\rho_a}{ds}<\frac{-4n+4m}{4n+2}\frac{d\alpha_a}{ds}.\]
Integrating the above inequalities on $[0,s]$ and making $s\to+\infty$ we obtain
\begin{equation*}
\sqrt{a\frac{4n-4m-1}{4n+1}}\leq\lim\limits_{s\to\infty}\rho_a(s)\leq\sqrt{a\frac{4n-4m}{4n+2}}.
\end{equation*}
Next we show that the family $(\gamma_a)_{a>0}$ forms a foliation of the orbit space. For this we consider the foliation of the orbit space for arcs of the parabolae $\alpha=q^2\rho^2$, $q\in(0,+\infty]$. We already know that each $\gamma_a$ must cut across all these arcs exactly once. On the other hand, Remark~\ref{parabolic-remark}~(\ref{parabolic-explicit-solutions}) says that (\ref{parabolic-ode}) is invariant by dilatations $(\alpha,\rho)\mapsto (r^2\alpha,r\rho)$ fixing $\sigma$. Since this one-parameter group leave each arc of parabola invariant is clear that $\gamma_a$ and $\gamma_{a'}$ cannot mutually intersect if $a\not=a'$.

Finally, let $\Gamma_a$ be the hypersurface of $\qhs^n$ generated by $\gamma_a$. The arguments above show that the family $(\Gamma_a)_{a>0}$ form a transvection-invariant, minimal foliation with leaf diffeomorphic to $\mathbf{R}^{4n-1}$. The ideal boundary $\partial\Gamma_a$ is the closure in $\partial\qhs^n$ of the $H$-orbit of $\lim\limits_{s\to+\infty}\gamma_a(s)=(0,R_a)$. In the Siegel domain we have that
\begin{align*}
\partial\Gamma_a =& \left\{\zeta\in\partial\mathcal{S}^n:\sum_{l=1}^{n-m}|\zeta_l|^2=R_a^2\right\}\cup\{\infty\}\\
=& \left\{\zeta\in\mathbf{Q}^n:
\sum_{l=1}^{n-m}|\zeta_l|^2=R_a^2=2\Re(\zeta_n)-\sum_{l=n-m+1}^{n-1}|\zeta_l|^2\right\}\cup\{\infty\}.
\end{align*}
Therefore $\partial\Gamma_a$ is a pinched Hopf manifold of type $(4m-1,4n-4m-1)$. This completes the proof of Theorem~\ref{theorem-three} part (\ref{theorem-three-i}).
\subsection{The special parabolic case}
We next analyse system~(\ref{special-parabolic-ode}) for $\mathbf{h}\equiv0$. The function $I=\alpha^{-2n-1}\sin\sigma$ is a first integral of (\ref{special-parabolic-ode}), i.e. it is constant along any solution curve. Let $c(s)=(\alpha(s),\rho(s),\sigma(s))$ the solution with initial conditions $c(0)=(1,0,\frac{\pi}{2})$. Then $I\equiv1$ along $c$, so $\sigma(s)\in(0,\pi)$ for all $s>0$. From the third equation in (\ref{special-parabolic-ode}) we get that $\frac{d\sigma}{ds}>0$, so $\lim\limits_{s\to+\infty}\sigma(s)\in(\frac{\pi}{2},\pi]$. Thus, it follows from the first equation in (\ref{special-parabolic-ode}) that $\lim\limits_{s\to+\infty}\alpha(s)=0$. But $I(s)=1$, then $\lim\limits_{s\to+\infty}\sigma(s)=\pi$. Finally, $I\equiv1$ gives that $\sin\sigma=\alpha^{2n+1}$ and substituting into $\frac{d\rho}{d\alpha}=\frac{\tan\alpha}{2\sqrt{\alpha}}$ yields
\[\rho(\alpha)=\frac{1}{2}\int_{\alpha}^1\sqrt{\frac{t^{4n+1}}{1-t^{4n+2}}}\,dt,\]
for $0<\alpha\leq1$. This is an elliptic integral, convergent at $t=1$. The graph of $\rho=\rho(\alpha)$ can be continued to a complete solution curve of (\ref{special-parabolic-ode}) by reflection on the line $\rho=0$. This gives a solution curve generating a minimal hypersurface $\Gamma$ in $\qhs^n$ diffeomorphic to $\mathbf{R}^{4n-1}$, whose ideal boundary $\partial\Gamma$ is the closure in $\partial\qhs^n$ of the $H$-orbit of the pair of points $(0,\pm R)$, where $R=\rho(0)$. In the Siegel domain we have that
\[\partial\Gamma=\{\zeta\in\partial\mathcal{S}:\Re(\zeta_{n-1})=R\}\cup\{\infty\}\cup
\{\zeta\in\partial\mathcal{S}:\Re(\zeta_{n-1})=-R\}.\]
So $\partial\Gamma$ is a bouquet of two spheres, glued at the point at infinity. Finally, by applying transvections (see Remark~\ref{special-parabolic-remark}~(\ref{special-parabolic-explicit-solutions})) to $\Gamma$ we get the desired foliation of $\qhs^n$, and this completes the proof of Theorem~\ref{theorem-three}, part (\ref{theorem-three-ii}).

From Remark~\ref{special-parabolic-remark}~(\ref{special-parabolic-explicit-solutions}), the lines $\rho \equiv R$ define a $\rho$-translation invariant foliation of the orbit space by solution curves of (\ref{special-parabolic-ode}) and the leaves of the corresponding foliation on $\qhs^n$ are fans. This, together with Proposition~\ref{fans}, completes the proof of Theorem~\ref{theorem-three}, part (\ref{theorem-three-iii}).
\begin{acknowledgments}
The author would like to thank Prof.~Claudio Gorodski for useful discussions and several valuable comments.
\end{acknowledgments}
\bibliography{cmhqhs-ams-5}
\bibliographystyle{amsalpha}
\end{document}